\definecolor{darkred}{rgb}{1,0,0} 
\definecolor{darkgreen}{rgb}{0,0.8,0}
\definecolor{darkblue}{rgb}{0,0,1}
\numberwithin{equation}{section}
\newtheorem {theorem}{Theorem}
\numberwithin{theorem}{section}
\newtheorem {lemma}[theorem]{Lemma}
\newtheorem {proposition}[theorem]{Proposition}
\newtheorem {corollary}[theorem]{Corollary}
\theoremstyle{definition}
\newtheorem{definition}[theorem]{Definition}
\theoremstyle{remark}
\newtheorem{remark}[theorem]{Remark}
\newtheorem{example}[theorem]{Example}
\chardef\csname pre amssym.def at\endcsname=\the\catcode`\@
\def\undefine#1{\let#1\undefined}
\def\newsymbol#1#2#3#4#5{\let\next@\relax
 \ifnum#2=\@ne\let\next@\msafam@\else
 \ifnum#2=\tw@\let\next@\msbfam@\fi\fi
 \mathchardef#1="#3\next@#4#5}
\def\mathhexbox@#1#2#3{\relax
 \ifmmode\mathpalette{}{\m@th\mathchar"#1#2#3}%
 \else\leavevmode\hbox{$\m@th\mathchar"#1#2#3$}\fi}
\def\hexnumber@#1{\ifcase#1 0\or 1\or 2\or 3\or 4\or 5\or 6\or 7\or 8\or
 9\or A\or B\or C\or D\or E\or F\fi}
\font\teneufm=eufm10
\font\seveneufm=eufm7
\font\fiveeufm=eufm5
\numberwithin{equation}{section}
\font\teneufm=eufm10
\font\seveneufm=eufm7
\font\fiveeufm=eufm5
\DeclareMathAlphabet{\mathpzc}{OT1}{pzc}{m}{it}
\def    \HC     {\operatorname{HC}}
\def    \HH     {\operatorname{H}}
\def    \C      {\operatorname{C}}
\def    \Lnc   {\operatorname{\Lambda^{n-1}_{\comp}}}
\def    \LNC   {\operatorname{\Lambda^{n}_{\comp}}}
\def    \a     {\operatorname{\alpha}}
\def    \H     {\operatorname{\mathcal{H}}}
\def    \A     {\operatorname{\mathcal{A}}}
\def    \s     {\operatorname{\mathfrak{s}}}
\def    \k     {\operatorname{\kappa_{1}}}
\def    \kk    {\operatorname{\kappa_{2}}}
\def    \MUCZ  {\operatorname{\mu_{\scriptscriptstyle{CZ}}}}
\def    \MURS  {\operatorname{\mu_{\scriptscriptstyle{RS}}}}
\def    \DGW   {\operatorname{\Delta_{\textsc{DGW}}}}
\def    \D     {\operatorname{\Delta}}
\def    \eul   {\operatorname{\textsl{e}}}
\def    \rk    {\operatorname{rk}}
\newcommand{\CHI}{\overset{\circ}{\chi}}
\newcommand{\X}{\mathcal{X}}
\def    \ep    {\operatorname{\epsilon}}
\def    \fT    {\operatorname{\textit{f}_{\scriptscriptstyle T}}}
\def    \ker   {\operatorname{ker}}
\def    \V     {\operatorname{V}}
\def    \comp  {\operatorname{\mathbb{C}}}
\def    \real  {\operatorname{\mathbb{R}}}
\def    \ia  {\operatorname{(I1)}}
\def    \ib  {\operatorname{(I2)}}
\def    \ic  {\operatorname{(I3)}}
\def    \id  {\operatorname{(I4)}}
\def    \ie  {\operatorname{(I5)}}
\newcommand{\col}{\colon}
\newcommand{\vs}{\vspace{2mm}}
\begin{document}

\title[Mean Euler Characteristic]{On the Mean Euler Characteristic  of \\ Contact Manifolds}

\author[Jacqui Espina]{Jacqueline Espina}

\address{Universit\'e de Lyon; CNRS;  Universit\'e Lyon 1; Institut Camille Jordan, France}
\email{espina@math.univ-lyon1.fr}

\subjclass[2000]{53D35, 53D42}
\keywords{Contact structures, contact homology, the mean Euler characteristic}
\date{\today} 
\thanks{The work is partially supported by NSF. The research leading to these results has received funding
from the European Community's Seventh Framework Progamme
([FP7/2007-2013] [FP7/2007-2011]) under
grant agreement $\text{n}\textsuperscript{o}$ [258204]}

\begin{abstract} 
We express the mean Euler characteristic of a contact structure in terms of the mean indices of closed Reeb orbits for a broad class of contact manifolds, the so-called \textit{asymptotically finite} contact manifolds. We show that this class is closed under subcritical contact surgery  and examine the behavior of the mean Euler characteristic under such  surgery.  To this end, we revisit the notion of index-positivity for contact forms. We also obtain an expression for  the mean Euler characteristic in the Morse-Bott case.
\end{abstract}

\maketitle

\tableofcontents

\section{Introduction and main results }
\label{sec:intro and results}
\subsection{Introduction} \label{sec:intro}
In this paper, we establish several formulas expressing the mean Euler characteristic (MEC) of a contact manifold via local invariants of periodic orbits of a Reeb flow. True to its name, the mean Euler characteristic  is the average alternating sum of the ranks of cylindrical or linearized contact homology.  
This is a powerful enough invariant to distinguish inequivalent contact structures within the same homotopy class,  such as classes of  Brieskorn manifolds \cite{VK:thesis}, including Ustilosky spheres, \cite{U}. On the other hand as was observed in \cite{GK}, the MEC of a contact manifold can be calculated using only local properties of closed Reeb orbits (the mean indices), provided that the Reeb flow has only a finite number of simple closed orbits. The latter restriction is quite severe and there are very few contact manifolds meeting this requirement.

Our first goal in this paper is to relax the finiteness requirement and replace it by a much more flexible condition which we refer to as \textit{asymptotic finiteness}.  The essence of this condition is that all homological information is carried by a finite collection of orbits shared by a sequence of contact forms, while the indices of the remaining orbits grow and hence their contribution to the contact homology is trivial for any given degree. The contact structures satisfying this condition are quite common and as a consequence of our formula, for each such contact structure, the MEC can be calculated by elementary means bypassing the differential of the contact homology. Furthermore, we show that asymptotic finiteness is preserved by subcritcal surgery, (cf. \cite{U2,W,Yau}),  and that the MEC changes under such surgery in a very simple way. Namely, the difference between the MEC before and after a surgery is $\pm\frac{1}{2}$ for contact manifolds of dimension greater than or equal to 5. This is automatically true in dimension $3$ for the linearized MEC (i.e., the contact connect sum operation changes the MEC by $-\frac{1}{2}$) and we expect to soon show the same is true for the cylindrical version, see Section \ref{sec:dim 3}.  On the other hand, the behavior of contact homology under contact surgery is more involved and is still to be discussed in literature on a rigorous level (cf. \cite{Bo:survey, BEE, BVK}). We also prove a simple formula expressing the mean Euler characteristic for contact forms of Morse-Bott type. 


An integral part of our analysis is a new notion of index-positivity, called here \textit{weak index-positivity}, relaxing the notion introduced by Ustilosky in \cite{U}, and later investigated by van Koert in \cite{VK:thesis}.  Both notions depend on a certain extra structure. For Ustilosky's index-positivity, this is a stable trivialization of a contact structure $\xi$. (Hence $\xi$ must be stably trivial.) For weak index-positivity, thinking of $\xi$ as a complex vector bundle, this is a section of the determinant line bundle of $\xi$, and hence the only requirement we have is that $c_1(\xi)=0$. The latter condition is much more likely to survive a subcritical surgery and we show that, whenever this is the case, so does weak index-positivity.

\subsection{Main results} \label{sec:main results}

Let $(M,\xi)$ be a closed contact manifold. Denote the contact homology by $\HC_{*}(M,\xi)$. For the sake of simplicity, we restrict ourselves to cylindrical contact homology of $(M,\xi)$ although our results translate to linearized contact homology in a straightforward fashion; see Section \ref{sec:lch}. We refer the reader to  \cite{Bo:survey, BO, EGH, Ge:contact top} for a general introduction to contact manifolds and contact homology.  Throughout this paper, we assume that $c_1(\xi)=0$.  The contact homology of $M$ breaks down as a direct sum over free homotopy classes of Reeb orbits. Thus the contact homology is defined for any collection of such classes. In what follows, we implicitly assume that this collection is $\{0\}$, that is, we consider only homologically trivial orbits. However,  we can work with all closed Reeb orbits. One should be aware that for homologically non-trivial Reeb orbits, the grading is not unambiguously defined and requires fixing an extra structure on $(M,\xi)$. In our case,  a  convenient way to eliminate this ambiguity is by picking a section of the determinant line bundle of $\xi$; see Section \ref{sec:non-contr}.

Assume that the following condition is satisfied:
\begin{itemize}
	 \item[(CF)]There are integers $l_+$ and $l_-$ such that $\HC_l(M,\xi)$ is finite dimensional for $l \geq l_+$ and $l \leq l_-$.
\end{itemize}
Then we set the \textit{positive/negative mean Euler characteristic} of $(M,\xi)$ to be the following:
\begin{equation} 
\label{eq:pmMEC}
\chi^{\pm}(M,\xi):=\lim_{N \to \infty}\frac{1}{N}\sum_{l=l_{\pm}}^N (-1)^{l}\dim\HC_{\pm l}(M,\xi), 
\end{equation}
provided that the limits exist.  When both limits exist, the \textit{mean Euler characteristic} is set as
\begin{equation}\label{eq:MEC}\chi(M,\xi):= \frac{\chi^{+}(M,\xi)+\chi^{-}(M,\xi)}{2}.\end{equation}
The MEC  was introduced by van Koert  in his investigation of contact structures of a certain class of Brieskorn manifolds; see \cite{VK:thesis}. In a slightly different context, it was also considered by Ekeland-Hofer \cite{EH}, Rademacher \cite{Ra1}, and Viterbo \cite{Vi}.  
As has been pointed out, the mean Euler characteristic can also be defined for various flavors of contact homology as well as  restricted to a specific collection of homotopy classes of closed Reeb orbits. 

The calculation of $\HC_*(M,\xi)$ becomes transcendentally difficult when the Floer differential $\partial$ is nontrivial. On the other hand, similarly to the ordinary Euler characteristic, the mean Euler characteristic can sometimes be calculated in terms of closed Reeb orbits without reference to $\partial$. One such case is considered by Ginzburg and Kerman in~\cite{GK}. 

Namely, it is shown in \cite{GK} that if the following condition is satisfied,
\begin{itemize}
	\item[(CHF)] The Reeb flow has finitely many simple periodic orbits,
\end{itemize}
and  the contact homology is defined, then the MEC  is also defined and  can be expressed as a sum involving only topological indices  and the mean indices of the simple Reeb orbits. The finiteness condition (CHF) is quite restrictive.  Our first goal  is to generalize Ginzburg and Kerman's formula to contact forms with infinitely many simple closed Reeb orbits in a meaningful way.  The first such generalization is to an asymptotically finite manifold mentioned above; for the definition, see Section~\ref{sec:AF}.

\begin{theorem}[MEC formula: asymptotically finite version]
\label{thm:AFMEC} 
Let $(M^{2n-1},\xi)=\{(M,\a_r)\}$ be an asymptotically finite contact manifold and assume that  for each contact form $\alpha_r,$ there are no Reeb orbits of degree $-1,0$ or $1$.
Then the  mean Euler characteristic is defined and
\begin{equation}\label{eq:AFMEC}
\chi^{\pm}(M,\xi)={\sum}^{\pm} \frac{\sigma_{x_i}}{\Delta_{x_i}}+\frac{1}{2}{\sum}^{\pm}\frac{\sigma_{y_i}}{\Delta_{y_i}}, 
\end{equation}
where $\sum ^+$ (respectively $\sum^-$) stands for the sum over the sequences of good principal orbits with positive (respectively negative) asymptotic mean index. 
\end{theorem}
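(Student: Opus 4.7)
The plan is to follow the strategy used by Ginzburg and Kerman in the finite case and replace the finiteness hypothesis (CHF) by its asymptotic version: at each fixed degree $l$ only the principal orbits matter, so the chain complex truncates to a finite-dimensional piece and an Euler--Poincar\'e argument applies. The resulting counting problem is converted into the claimed sum by the standard density argument based on $\MUCZ(x^k)=k\D_x+O(1)$.

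First, I would fix a degree window $[l_+,N]$ and, using asymptotic finiteness, choose $r=r(N)$ large enough that every non-principal simple orbit of $\alpha_r$ has each of its iterates of Conley--Zehnder index outside $[l_+,N]$. The hypothesis ruling out Reeb orbits of degree $-1,0,1$ guarantees that cylindrical contact homology is well defined for each $\alpha_r$ and that the invariance isomorphisms between different $\alpha_r$'s do not disturb the window. In this way $\HC_l(M,\xi)$, for $l$ in the window, is computed by a finite sub-complex generated by good iterates of the principal orbits. Applying Euler--Poincar\'e,
\begin{equation*}
\sum_{l=l_+}^N (-1)^l \dim \HC_l(M,\xi) \;=\; \sum_{\substack{z\ \text{good principal iterate}\\ \MUCZ(z)\in[l_+,N]}} (-1)^{\MUCZ(z)} \;+\; O(1).
\end{equation*}

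Second, I would split the right-hand count according to the underlying simple principal orbit (or sequence of such). For a simple orbit $x$ with asymptotic mean index $\D_x\neq 0$, the relation $\MUCZ(x^k)=k\D_x+O(1)$ shows that its iterates enter the window with asymptotic density $1/|\D_x|$, and with a common sign $\sigma_x\in\{\pm 1\}$ whenever every iterate of $x$ is good (the elliptic-type or positive hyperbolic case); this produces the terms $\sigma_{x_i}/\D_{x_i}$. For a simple orbit $y$ whose even iterates are bad (negative hyperbolic), only odd iterates contribute, lowering the density to $1/(2|\D_y|)$, accounting for the factor $\frac{1}{2}$ in front of the second sum. The sign of $\D$ decides whether a sequence enters $\sum^+$ (iterates drifting to $+\infty$ in degree) or $\sum^-$ (drifting to $-\infty$), so, after dividing by $N$ and letting $N\to\infty$, the $O(1)$ boundary term and the fixed lower bound $l_+$ vanish in the Ces\`aro average and the remaining densities assemble into the claimed formula \eqref{eq:AFMEC} for $\chi^{+}$; the case $\chi^{-}$ is symmetric.

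The main obstacle, distinguishing this from the finite case of \cite{GK}, will be controlling the truncation uniformly as $N$ varies. The parameter $r=r(N)$ has to grow with $N$, so one must check that the successive invariance isomorphisms $\HC_*(\alpha_{r(N)})\cong\HC_*(\alpha_{r(N')})$ respect the filtration by principal iterates up to a bounded error, and, equivalently, that the definition of asymptotic finiteness is strong enough to force the contribution of non-principal orbits in the window $[l_+,N]$ to be $o(N)$. Once this bookkeeping is carried out, the density count of the previous paragraph yields the theorem by a direct computation parallel to the one in the finite case.
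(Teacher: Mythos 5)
Your proposal is correct and follows essentially the same strategy as the paper's own proof: truncate the chain complex of $\alpha_r$ to the window $[l_+,d_r]$ (you parametrize by $N$ and choose $r(N)$, which is equivalent), use Euler--Poincar\'e to pass from chains to homology, count iterates of principal orbits with density $1/\Delta$ (halved for the type-II orbits since bad iterates are dropped), and let the window length go to infinity. The one slip is that the sign in the chain-level Euler count should be $(-1)^{|z|}=\sigma(z)$ rather than $(-1)^{\MUCZ(z)}$ (they differ by a global factor $(-1)^{n-3}$, which would spoil the match with the $\sigma$'s in the target formula); also, your worry about ``invariance isomorphisms disturbing the window'' is moot because (AF2) already says that \emph{no} non-principal orbit appears in $\mathcal{P}^{d_r}(\alpha_r)$, so the truncated complex is unconditionally finite and its homology equals $\HC_l(M,\xi)$ on the interior of the window without any additional bookkeeping.
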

Here, $\sigma_z$ and $\Delta_z$ are the (asymptotic) topological and mean indices of a principal Reeb orbit $z$; see Section \ref{sec:AF}.
We use $x$ and $y$ to distinguish between the two different types of \textit{good} Reeb orbits that generate the contact chain groups. More specifically, we let  $x$ and $y$ denote the Reeb orbits such that the parity of $\MUCZ(x^k)$ remains constant under $k$ iterations while the parity of $\MUCZ(y^k)$ alternates. 

An important feature of the class of asymptotically finite contact manifolds is that this class is essentially closed under subcritical contact surgery.

\begin{theorem}
\label{thm:AFSHA}
Let $(M^{2n-1},\xi)=\{(M,\a_r)\}$ be an asymptotically finite contact manifold. Suppose there is a non-vanishing section $\s$ of $(\Lnc\xi)^{\otimes 2}$ such that 
each $\a_r$ is weakly index-positive with respect to $\s$.  Denote by $(M',\xi')$ a contact manifold obtained from performing a subcritical contact surgery to $(M,\xi)$ of index $k$. If $c_1(\xi')=0$ and if $\s$ extends over the surgery, then $(M',\xi')$ is  asymptotically finite and also weakly index-positive with respect to some extension $\s'$ of~$\s$.
\end{theorem}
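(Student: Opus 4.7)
The plan is to extend each $\a_r$ to a contact form $\a_r'$ on $M'$ via the standard Weinstein--Cieliebak subcritical handle model, so that the sequence $\{(M',\a_r')\}$ witnesses the asymptotic finiteness of $(M',\xi')$ with respect to a suitable extension $\s'$ of $\s$. Since $c_1(\xi')=0$, the line bundle $\Lnc\xi'$ admits non-vanishing sections, and by the hypothesis that $\s$ extends one may pick a non-vanishing section $\s'$ of $(\Lnc\xi')^{\otimes 2}$ that restricts to $\s$ on the complement of the surgery region. Following Weinstein (with concrete models used by Cieliebak, Yau, and van Koert), one then replaces $\a_r$ in a tubular neighborhood of the attaching sphere by a model contact form on the index-$k$ handle $H_k$, so that $\a_r'$ agrees with $\a_r$ outside $H_k$ and is controlled by an explicit quadratic-type Reeb dynamics inside.

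The model can be chosen so that all closed Reeb orbits lying in $H_k$ occur in families whose action and Conley--Zehnder index can be made as large as we wish by rescaling the handle; concretely, the handle orbits have mean index growing linearly in the iteration with a strictly positive rate determined by the rotation numbers of the model. Now two properties must be checked for $\a_r'$ against $\s'$. For \emph{weak index-positivity}, orbits of $\a_r'$ outside $H_k$ retain their trivializations and indices because $\s'|_{M\setminus H_k}=\s$, so the given weak index-positive estimate for $\a_r$ applies unchanged on this part. For orbits inside $H_k$, subcriticality $k<n$ together with the explicit formula for $\MUCZ$ in the handle model provides a uniform linear lower bound of $\MUCZ$ in terms of the period; patching these two bounds yields weak index-positivity of $\a_r'$ with respect to $\s'$.

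For \emph{asymptotic finiteness}, the good principal Reeb orbits of $\a_r'$ decompose into two types: those inherited from $\a_r$, whose asymptotic topological and mean indices computed with respect to $\s'$ coincide with those computed with respect to $\s$ (since $\s'$ extends $\s$), and the new orbits produced in $H_k$, which one arranges to have arbitrarily large action and index as $r\to\infty$ by scaling the handle parameters with $r$. Consequently the finite collection of principal orbits controlling $\HC_*(M',\xi')$ in any fixed degree persists, and the definition of asymptotic finiteness (with the same organizing structure as for $(M,\xi)$) is satisfied by $\{(M',\a_r')\}$.

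The main obstacle is the Conley--Zehnder index computation inside the handle relative to the determinant-squared trivialization supplied by $\s'$, rather than by a stable trivialization of $\xi'$. Prior sources (Ustilosky, van Koert) perform such computations in the presence of a stable trivialization, so one must carefully transport those computations to the weaker setting of a section of $(\Lnc\xi')^{\otimes 2}$, verifying that $k<n$ indeed gives a strict enough linear lower bound on $\MUCZ$ to yield both weak index-positivity and the divergence of indices required to push the handle orbits out of any fixed degree range. Once this index calculation is under control, gluing it with the estimates already available on $M$ completes the argument.
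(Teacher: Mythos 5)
Your overall plan -- extend each $\a_r$ over the handle, control the new dynamics by a quadratic model, and estimate indices against the section $\s'$ -- matches the paper's strategy (Theorem \ref{thm:WIP} and Section \ref{sec:pf-AFSHA}). But there is a genuine gap in the asymptotic finiteness part. You claim that ``the new orbits produced in $H_k$'' can be ``arranged to have arbitrarily large action and index as $r\to\infty$ by scaling the handle parameters'', and you conclude that the principal orbits of $(M',\xi')$ are ``the same organizing structure as for $(M,\xi)$''. That is not achievable: by Yau's theorem (quoted as Theorem \ref{thm:Yau CC}), for any degree cutoff $m$ the thinned subcritical handle still carries a closed Reeb orbit $x(m)$ with $|x(m)^i|=2n-k-4+2i$, so the simple orbit has bounded degree $2n-k-2$ that does \emph{not} escape to infinity as you thin. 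The correct statement is that the handle contributes one \emph{new} sequence of principal orbits $\{x(r)\}$ with asymptotic mean index $\Delta_x=2$, which must be appended to the list $\{\gamma_1(r),\ldots,\gamma_l(r)\}$ in (AF1). Dropping this orbit is not a small omission: it carries the entire content of Corollary \ref{cor:AFSHAMEC} ($\chi$ changes by $(-1)^k\tfrac12$, exactly the contribution of $x(r)$); with your claim that the organizing structure is unchanged, the MEC would be unchanged, contradicting the corollary.

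A secondary soft spot is the weak index-positivity argument. You split orbits into ``those outside $H_k$'' and ``those inside $H_k$'' and say patching the two bounds finishes the proof, but the delicate case is a Reeb trajectory that repeatedly crosses the handle boundary, where neither bound alone applies. The paper handles this by decomposing the trajectory into segments alternating between $M$ and $\H$, controlling the gluing error at each crossing by the $\mu$-catenation lemma, and using thinness to guarantee segments in $M$ have large action so the accumulated catenation error is dominated. Your proposal should make this decomposition and error control explicit; as written, ``patching these two bounds'' elides exactly the step where the argument could fail.
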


\noindent In fact, we prove a more general result (Theorem \ref{thm:WIP}), and as a consequence, we have the following:

\begin{corollary}
\label{cor:AFSHAMEC} 
Let $(M,\xi)=\{(M,\a_r)\}$ and $(M',\xi')$ be  contact manifolds that satisfy the conditions of Theorem \emph{\ref{thm:AFSHA}} and also assume that $n>2$. In addition, for each contact form $\alpha_r$, assume there are no Reeb orbits of degree $-1,0$ or $1$. Then,
\begin{equation}\label{eq:AFSHAMEC}
\chi(M',\xi')=\chi(M,\xi)+(-1)^{k}\frac{1}{2}.
\end{equation}
\end{corollary}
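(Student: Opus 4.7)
The plan is to reduce Corollary~\ref{cor:AFSHAMEC} to a direct comparison of the formulas produced by Theorem~\ref{thm:AFMEC} applied to $(M,\xi)$ and to $(M',\xi')$, isolating the contribution to the MEC coming from the new Reeb orbits created by the surgery handle.

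First I would invoke Theorem~\ref{thm:AFSHA} to guarantee that $(M',\xi')$ is asymptotically finite and weakly index-positive with respect to some extension $\s'$ of $\s$. Together with the hypothesis that no Reeb orbit of $\a_r$ has degree in $\{-1,0,1\}$, and with the condition $n>2$ (which forces the new orbits produced by a subcritical handle to have Conley--Zehnder index bounded away from $\{-1,0,1\}$ after a small perturbation), this places us in the situation of Theorem~\ref{thm:AFMEC} for both $(M,\xi)$ and $(M',\xi')$. We then obtain explicit expressions for $\chi^{\pm}(M,\xi)$ and $\chi^{\pm}(M',\xi')$ of the form $\sum \sigma_{x_i}/\D_{x_i}+\tfrac12\sum \sigma_{y_i}/\D_{y_i}$, and the task reduces to computing the difference of the two expressions.

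Next I would analyze the change in Reeb dynamics under the subcritical contact surgery. Using the standard subcritical Weinstein handle model of Weinstein, Ustilovsky, and Yau, refined in the proof of the more general Theorem~\ref{thm:WIP}, one arranges the defining sequence $\a'_r$ on $M'$ so that its closed Reeb orbits consist of (i) the closed Reeb orbits of $\a_r$ supported outside the surgery region, whose topological and asymptotic mean indices computed with respect to $\s'$ agree with those computed with respect to $\s$, together with (ii) a single new simple principal orbit $z_0$ carried by the handle, along with its iterates. Consequently the right-hand side of \eqref{eq:AFMEC} changes between $(M,\xi)$ and $(M',\xi')$ only by the contribution of the sequence $\{z_0^k\}$. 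A direct model computation in the trivialization prescribed by $\s'$ shows that $z_0$ is a good orbit whose contribution to the averaged sum $\tfrac12(\chi^{+}+\chi^{-})$ equals $(-1)^{k}/2$, which is precisely \eqref{eq:AFSHAMEC}.

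The main obstacle is the explicit index computation for $z_0$. One must fix a contact form near the handle for which the new orbit is non-degenerate (or handle a Morse--Bott family and then perturb, using the MEC formula for Morse--Bott forms announced in the introduction), track the trivialization of $\xi'$ along $z_0^{k}$ induced by $\s'$, and verify that the resulting Conley--Zehnder indices have the parity dependence on $k$ needed to produce the sign $(-1)^{k}$. These ingredients are already required in the proof of Theorem~\ref{thm:WIP}, so once that more general statement is in place the corollary follows by the accounting described above.
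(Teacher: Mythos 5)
Your proposal follows essentially the same route as the paper. The paper first records Corollary~\ref{cor:SHAMEC} (which rests on Theorem~\ref{thm:WIP}(2), i.e., the generator count $c'_j = c_j + b_j$, together with the observation that the handle's principal orbit satisfies $\Delta(x(r))=2$ and $|x(r)^m| = 2n-k-4+2m$, hence $\sigma_x = (-1)^{k}$ and $x$ is of type~I); Corollary~\ref{cor:AFSHAMEC} is then the remark that Theorem~\ref{thm:AFMEC} guarantees $\chi(M,\xi)$ is defined, after which Corollary~\ref{cor:SHAMEC} applies verbatim. Your plan of invoking Theorem~\ref{thm:AFSHA} and Theorem~\ref{thm:AFMEC} on both sides and isolating the contribution of the single new principal orbit $z_0$ from the handle is the same accounting, just without the intermediate packaging. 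One small note: you defer the ``direct model computation'' for $z_0$, but these data are already supplied by Theorem~\ref{thm:Yau CC} and the proof of Theorem~\ref{thm:WIP}, namely $\Delta(z_0)=2$, degree $2n-k-4+2m$ for the $m$-th iterate (so the sign is $(-1)^{k}$ and the orbit is good of constant parity), and $\Delta(z_0)>0$ so it only enters the $\chi^{+}$ sum; you should cite these rather than leave the computation open. You are also right that $n>2$ is what keeps the new orbits' degree $2n-k-4+2m \ge 2$ away from $\{-1,0,1\}$ so that cylindrical contact homology remains defined after surgery.
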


\begin{remark}
\label{remark:lch}
A surgery of index $k=1$ is the contact connect sum. It has been pointed out by Chris Wendl that this operation in dimension $3$ introduces a contractible closed Reeb orbit of degree~$1$. (This simple fact was missed in the first version of this paper.) The condition that there are no contractible closed Reeb orbits of degree $-1,0$, and $1$, is required to define cylindrical contact homology. However, Equation \ref{eq:AFSHAMEC} still holds for the linearized mean Euler characteristic in this dimension, see Section \ref{sec:lch}. At this moment, we are working to prove the same is true for the cylindrical MEC, see Section \ref{sec:dim 3}. 
\end{remark}

\begin{remark}
\label{remark:s'}
As previously mentioned, weak index-positivity is a property of a contact form that is very likely to survive subcritical surgery and now we elaborate on this point. Suppose  $(M',\xi')$ is a contact manifold obtained by performing contact surgery on $(M^{2n-1},\xi)$ of index $k$; see  \cite{W,Yau}. Part of this procedure, very roughly speaking, involves removing a small neighborhood of an embedded isotropic sphere $S^{k-1}_M$ and  glueing a certain contact manifold $\H$ in its place. This contact  manifold  $\H$ can be thought of as  a subset $D^{k}\times S^{2n-k-1}$ of $D^{k}\times D^{2n-k}$ modelled in the standard symplectic space $(\real^{2n},\omega_0)$ and attached to $M$ along $S^{k-1}\times S^{2n-k-1}$, by using the isotropic $(k-1)$-attaching sphere $S^{k-1}\times \{0\}$.  

Assume $c_1 (\xi)=0$. Under contact surgery, except for possibly the case $k=2$,  the first Chern class does not change and a given section $\s$ automatically extends to the new manifold. Suppose $k=2:$ If $S^1$ is contractible, then glueing in a $2$-disc can possibly change the first Chern class. We would need to require $c_1(\xi')=0,$ and  then  $\s$ uniquely extends up to homotopy.  If $ S^1$ is not contractible, then $c_1(\xi')=0$ automatically; however, the section may fail to extend. 
\end{remark}

\begin{remark} 
We again point out that the effect of subcritical surgery on contact homology is still to be discussed in literature on a rigorous level. (However, see \cite{Bo:survey, BVK} for the case of connected sums.) Corollary \ref{cor:AFSHAMEC} gives a simple criterion of when a weakly index-positive contact manifold can be obtained from another via a sequence subcritical surgery. For instance, this cannot be the case for Ustilosky spheres; see Section \ref{sec:examples}.
\end{remark}

Although  a contact manifold of Morse-Bott type with finitely many simple Reeb orbifolds admits a set of  asymptotically finite contact forms, the mean Euler characteristic can be computed without explicitly finding such a set.  This is our next  result, a \textit{Morse-Bott} version of Ginzburg-Kerman's theorem.   In this setting, closed Reeb orbits form  smooth submanifolds.  After taking the quotient of these submanifolds by the natural $S^1$-action induced by the Reeb flow, these orbit spaces are generally orbifolds.   Furthermore, under suitable additional requirements on $(M,\a),$ the Morse-Bott contact homology of $(M,\a)$ is defined and isomorphic to the cylindrical contact homology; see \cite{Bo}. Given in Section \ref{sec:MB-prelims}, these requirements are spelled out in Theorem \ref{thm:MB} which is a main result in \cite{Bo}. With this approach, we prove:

\begin{theorem}[MEC Formula: Morse-Bott version] 
\label{thm:MBMEC}
Let $(M,\a)$ be a contact manifold of Morse-Bott type with finitely many simple Reeb orbifolds satisfying the conditions of Theorem \emph{\ref{thm:MB}}.  Then the mean Euler characteristic of $(M,\a)$ is defined and
\begin{equation}
\label{eq:MBMEC}
\chi^{\pm}(M,\xi)=\underset{\text{max'l} \ S_T}{\sum}^{\pm} \frac{\sigma(S_T)\eul(S_T)}{\Delta(S_T)},
\end{equation} 
where $\sum^+$ (respectively $\sum^-$) stands for the sum over all maximal  orbifolds $S_T$  with positive (respectively negative) mean index.
\end{theorem}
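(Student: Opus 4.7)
The strategy is to reduce the Morse-Bott case to the asymptotically finite case already treated by Theorem~\ref{thm:AFMEC}. Starting from a Morse-Bott contact form $\alpha$, choose a Morse function $\fT$ on each simple Reeb orbifold $S_T$. By the standard Morse-Bott perturbation scheme (cf.~\cite{Bo} and Theorem~\ref{thm:MB}), there is a sequence of positive functions $h_r\to 1$ in $C^2$ such that the contact forms $\alpha_r=h_r\alpha$ are nondegenerate up to period $R(r)\to\infty$, and their closed Reeb orbits of period $\leq R(r)$ are in bijective correspondence with the critical points of $\fT$ on each iterated orbifold $S_{kT}$ with $kT\leq R(r)$. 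Any additional closed orbit of $\alpha_r$ necessarily has period greater than $R(r)$, and hence Conley--Zehnder index whose absolute value tends to infinity uniformly in $r$. This is exactly what is needed to conclude that $\{(M,\alpha_r)\}$ is asymptotically finite, with principal orbits given by the sequences $\{x_p^k\}_k$ indexed by critical points $p$ of the Morse functions.

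Next, I would compute the asymptotic invariants of these principal sequences. Continuity of the mean index under $C^{1}$-perturbation gives
\begin{equation*}
\Delta(x_p^k)\longrightarrow k\,\Delta(S_T) \quad\text{as }r\to\infty,
\end{equation*}
so the asymptotic mean index of the principal sequence is $\Delta(S_T)$. For the Conley--Zehnder index, Bourgeois's formula reads
\begin{equation*}
\MUCZ(x_p^k)=\MUCZ(S_{kT})-\tfrac12\dim S_T+\operatorname{ind}_{p}(\fT),
\end{equation*}
so the asymptotic topological index of $\{x_p^k\}_k$ equals $(-1)^{\operatorname{ind}(p)}\sigma(S_T)$, where $\sigma(S_T)$ is the sign dictated by $\MUCZ(S_T)$ and the dimension of $S_T$. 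The $x$-versus-$y$ dichotomy of \eqref{eq:AFMEC} is inherited from the parity behavior of $\MUCZ(S_{kT})$ under $k\mapsto k+1$, and bad critical points drop out for the usual local-system reason.

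Substituting into \eqref{eq:AFMEC}, the contribution of a simple orbifold $S_T$ together with all of its iterates collapses, via the orbifold Morse identity
\begin{equation*}
\sum_{p\in\mathrm{Crit}(\fT)}(-1)^{\operatorname{ind}(p)}=\eul(S_T),
\end{equation*}
to $\sigma(S_T)\,\eul(S_T)/\Delta(S_T)$. Non-maximal orbifolds $S_{kT}$ with $k\geq 2$ appear in the sum only as iterates of the principal sequences $\{x_p^k\}$ that are already accounted for by the simple orbifold $S_T$; restricting to maximal orbifolds removes this double counting. Separating these contributions according to the sign of $\Delta(S_T)$ yields \eqref{eq:MBMEC}.

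The main obstacle is the asymptotic finiteness step: one must coordinate the choice of the perturbation parameter and the action window $R(r)$ so that the mean and topological indices of the surviving principal sequences converge to the Morse--Bott values $\Delta(S_T)$ and $\sigma(S_T)$, while simultaneously forcing the Conley--Zehnder indices of all ``new'' orbits (those of period exceeding $R(r)$) to leave every compact interval. This delicate balancing act is precisely the technical heart of the perturbation theory from \cite{Bo}, which is the reason the hypotheses of Theorem~\ref{thm:MB} are invoked. A secondary subtlety is the correct treatment of orbifold isotropy in $\eul(S_T)$ and the good/bad dichotomy at iterated covers; both are handled by the standard local-coefficient argument for Morse--Bott contact homology.
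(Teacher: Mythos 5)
Your strategy of reducing the Morse--Bott case to Theorem~\ref{thm:AFMEC} by perturbing $\alpha$ into an asymptotically finite sequence is a legitimate alternative to the paper's approach; the paper even remarks that such a sequence exists but then deliberately bypasses it and works directly with the Morse--Bott chain complex $\CMB_*(M,\alpha)$, truncating by degree and computing the Euler characteristic of $\C_*^{(N)}$. The two routes are genuinely different in organization, and the direct route is cleaner precisely because it avoids the bookkeeping you flag as the ``delicate balancing act'' in your last paragraph.

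However, your collapsing step contains a real error. The identity you invoke,
\begin{equation*}
\sum_{p\in\mathrm{Crit}(\fT)}(-1)^{\operatorname{ind}(p)}=\eul(S_T),
\end{equation*}
is false. By Proposition~\ref{prop:orb-MT}, the Morse complex of an admissible $\fT$ computes the singular homology of $S_T$, so the unweighted alternating count of critical points is $\chi(S_T)$, the Euler characteristic of the \emph{underlying space}, not the weighted invariant $\eul(S_T)=\sum_{\bar\sigma}(-1)^{\dim\bar\sigma}|\mathrm{Stab}(\bar\sigma)|$. The stabilizer weights in $\eul(S_T)$ do not come from Morse theory at all; they come from the mean-index bookkeeping. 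Concretely, if $x$ is a critical point of $\fT$ with minimal period $T_i$ lying on a stratum $S_{T_i}\subset S_T$ with $p_i=|\mathrm{Stab}(S_{T_i})|$, then $T=p_iT_i$ and $\Delta(x)=\Delta(S_{T_i})=\Delta(S_T)/p_i$. Hence the contribution of the iterates of $x$ to the truncated Euler characteristic is $N\sigma(x)/\Delta(x)=N\sigma(S_T)(-1)^{\operatorname{ind}(x)}p_i/\Delta(S_T)$, and the extra factor of $p_i$ is what assembles $\sum_{S_{T_i}}p_i\,\CHI(S_{T_i})=\eul(S_T)$ after summing over strata. Without this observation, substituting your (incorrect) Morse identity into \eqref{eq:AFMEC} would produce $\sigma(S_T)\chi(S_T)/\Delta(S_T)$, not the correct $\sigma(S_T)\eul(S_T)/\Delta(S_T)$, so the argument as written does not reach \eqref{eq:MBMEC}.
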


\noindent This time, the MEC formula  involves the indices $\sigma(\cdot)$  and $ \Delta(\cdot)$ defined for Reeb orbifolds. 
We denote by  $\eul(S),$   the following orbifold invariant.  Given an orbifold CW decomposition of a Reeb orbifold $S$, we set
\begin{equation}
\label{eq:orb-eul} 
\eul(S):=\underset{\bar{\sigma}}{\sum}(-1)^{\dim \bar{\sigma}}|Stab(\bar{\sigma})|,
\end{equation} 
where the sum runs over the $q$-cells $\bar{\sigma}$ of the decomposition, and $|Stab(\bar{\sigma})|$ is the order of the  \textit{stabilizer subgroup}.  
In a more general context, $\eul(\cdot)$ is introduced in \cite{ALR} as a variant of the orbifold Euler characteristic naturally arising in orbifold $K$-theory; this is discussed in Section \ref{sec:orb-inv}. The Morse-Bott version of the MEC formula generalizes the relation established in \cite{Ra1} for geodesic flows and also  for closed characteristics on convex hypersurfaces in \cite{HLW}.

\begin{remark} 
In the Morse-Bott case, the MEC formula has a simpler form than in both the asymptotically finite and Ginzburg-Kerman's versions. This is due to the  assumption of Theorem \ref{thm:MB} that there are  no bad Reeb orbifolds.  
\end{remark}

\begin{remark}
\label{remark:index-positivity}  
Suppose $(M,\a)$ is of Morse-Bott type but not all the conditions of Theorem \ref{thm:MB} are satisfied. For instance, if $(M,\a)$ is not required to be index-positive or negative, the Morse-Bott contact homology appears to still be defined although perhaps not necessarily isomorphic to the cylindrical contact homology.  The MEC formula \eqref{eq:MBMEC} still holds, but now the mean Euler characteristic is set via the Morse-Bott contact homology. Index positivity/negativity is necessary to push ``unpredictable'' orbits away to infinite index. 
 It has been brought to our attention by van Koert that in \cite{BVK} there are ingredients to remove or relax this assumption. 
\end{remark}

\begin{remark}
We again emphasize that homologically non-trivial classes of Reeb orbits can be taken into account and that there are also linearized contact homology versions of these results; see Sections \ref{sec:non-contr} and \ref{sec:lch}.
\end{remark}

\begin{remark}
One can also define the mean Euler characteristic for symplectic homology of a symplectic manifold $(W,\omega)$ with boundary $\partial W$ of contact type.
However, this Euler characteristic is always zero whenever $\chi^{\pm}(W,\omega)$ exists. This follows  from the long exact sequence relating the symplectic and contact homology in \cite{BO}.
On the intuitive level, this is to be expected because every closed Reeb orbit contributes to the chain groups in symplectic homology two generators with degree difference equal to $1$. 
\end{remark}

\begin{remark}
For a symplectic manifold with boundary of contact type as above, another variant of the MEC is given in \cite{FSVK} for the positive part of the equivariant symplectic homology of $(W,\omega)$ (as defined in \cite{BO2,Vi2}). In \cite{FSVK}, there is also an expression for the mean Euler characteristic which generalizes our Morse-Bott version of the MEC formula for circle bundles that we give in Example \ref{ex. circle bundle}.
\end{remark}

\begin{remark}
Finally, we would like to acknowledge that a rigorous construction of contact homology depends on a variety of transversality issues, which are currently being worked out; see \cite{HWZ1, HWZ2, HWZ3} and \cite{CM}, and also \cite{LT} and \cite{FOOO} for the case of Floer homology. We refer the reader to \cite{Bo:survey, BO}  for a detailed discussion of transversailty in this context.
\end{remark}

\subsection{Organization of the paper}
In Section \ref{sec:preliminaries}, we briefly recall the  definition of the mean index and contact homology, and set our conventions and notation.
\textit{Asymptotically finite} contact manifolds are defined  in Section \ref{sec:AF} and we prove  Theorem \ref{thm:AFMEC}  in Section \ref{sec:pf-AFMEC}.    \textit{Weak index-positivity} is introduced in Section \ref{sec:WIP-def}. Contact surgery and handle attaching make the main theme of Section \ref{sec:handles} which also includes the proofs of Theorem \ref{thm:AFSHA} and Corollary \ref{cor:AFSHAMEC}. The variants of the MEC formulas for non-contractible and linearized contact homology are discussed in Section \ref{sec:remarks}.  Section \ref{sec:MB} deals with the Morse-Bott version of the MEC formula and here we prove Theorem \ref{thm:MBMEC}. Finally in  Section \ref{sec:examples}, some examples are given.

\subsection{Acknowledgments}
The author would like to express her gratitude to Viktor Ginzburg for his constant interest, encouragement, advice, and introducing her to symplectic geometry and contact homology.
The author would also like to thank Yasha Eliashberg, Hansj\"org Geiges, Alexandru Oancea, and Otto van Koert for useful discussions. In addition, she thanks  Chris Wendl for pointing out the issue with contact surgery in dimension 3 and also for giving helpful suggestions.

\section{Preliminaries}
\label{sec:preliminaries}

\subsection{Indices of symplectic paths}
\label{sec:indices 1} 
In this section, we set some notation and state some properties for the mean and Conley-Zehnder indices defined for paths of symplectic transformations; see \cite{Long, SZ} for definitions.  Let  $(\real^{2n},\omega_0)$ be the standard symplectic vector space with a compatible complex structure $J_0$ and denote by $Sp(2n)$, the symplectic matrices  represented in the standard symplectic basis.  For a path of symplectic maps $\Psi\col[0,T]\rightarrow Sp(2n)$,   set   $\Delta(\Psi)$ to be its mean index, and if $\Psi$ is non-degenerate (i.e., $\det(\Psi(T)-I)\neq 0$), the Conley-Zehnder index is defined and we denote this by $\MUCZ(\Psi)$.  The mean index is a homogenous quasimorphism, i.e., this index satisfies the following:
\begin{enumerate}\label{items}
  \item[(Hom)]\label{Hom} if $\Psi(0)=I$ and $\Psi(kT+t)=\Psi(t)\Psi(T)^k$ for every $k\in \mathbb{N}$ and $t\geq 0,$ then $\D(\Psi^k)=k\D(\Psi)$ for every 
								$k\in\mathbb{N},$ and 
	\item[(QM)] $|\D(\Psi_1 \Psi_2)-\D(\Psi_1)-\D(\Psi_2)|<c$ where the constant $c$ is independent of the paths. 
\end{enumerate} 
\noindent For non-degenerate paths, we have: 
\begin{itemize}	
 	\item[$\ia$] $|\MUCZ(\Psi)-\D(\Psi)|<n,$ and

	\item[$\ib$] $\lim_{k\to\infty} \frac{\MUCZ(\Psi^k)}{k}=\D(\Psi)$.
	
\end{itemize}

There is a generalization of the Conley-Zehnder index introduced by Robbin and Salamon given in \cite{RS}. This approach provides a useful way to compute  the Conley-Zehnder index and extends the index to degenerate symplectic paths.  In most of this paper, we also call this the Conley-Zehnder index.

\subsection{Cylindrical contact homology}
\label{sec:CCHMEC}
Throughout this paper, $(M^{2n-1},\xi)$ is a closed contact manifold and $c_1(\xi)=0$. Let $\a$ be a contact form for $\xi$, denote the Reeb field by $R_{\a}$, the Reeb flow after time $t$ by $\varphi^t$, and suppose $x\colon[0,T]\rightarrow M$ is a Reeb trajectory. The linearized Reeb flow  preserves the symplectic form $d\a$ giving rise to a family of symplectic maps $d\varphi^{t}_{x}:\xi_{x(0)} \rightarrow \xi_{x(t)}$ along $x$.  Suppose $x$ is a periodic Reeb orbit. In a given trivialization $\Phi_{x}$ of $\xi$ along $x$, set  $\Delta(x):=\Delta(d\varphi^{t}_{x})$, and  if $x$ is non-degenerate (i.e. $\det(I-d\varphi^{t}_{x})\neq 0$), set $\MUCZ(x):=\MUCZ(d\varphi^{t}_{x})$. When $x$ is homologically trivial, there is a canonical way to trivialize $\xi_{x}$ and  both indices are independent of $\Phi_{x}$. However, if $x$ is homologically non-trivial, both $\Delta(x)$ and $\MUCZ(x)$ depend on the trivialization.

Contact homology can be thought of as a Morse theory for the action functional on the loop space of $M$, 
$$\A\col C^{\infty}(S^1,M)\rightarrow \real, \ \gamma\mapsto \int_{\gamma}\a.$$ 
The critical points of $\A$ are the closed orbits of the Reeb flow with period $T=\A(\gamma)$ when parameterized so that $\dot{\gamma}(t)=R_{\a}(\gamma(t))$. For $\A$ to be a Morse functional, the contact form must be chosen generically. This means that all closed Reeb orbits are non-degenerate. When there are homologically non-trivial Reeb orbits,  ambiguity arises in the definition of contact homology  due to the dependency of $\MUCZ(\gamma)$ on $\Phi_{x}$. However, contact homology can be restricted to homotopy classes of the Reeb orbits, so for  simplicity, we may first consider only contractible closed Reeb orbits. This also works for  homologically trivial orbits, and throughout this paper, contractibility can be replaced by homological triviality.  A brief description  of the contact chain complex restricted to the contractible class is given here and a discussion about the homologically non-trivial Reeb orbits can be found in Section~\ref{sec:remarks}.

Suppose $\gamma$ is a simple closed Reeb orbit and denote by $\gamma^k$ its $k^{th}$ iteration. Determined by the way $\MUCZ(\gamma^k)$ behaves,  $\gamma$ can be one of the following two types.  Either, 
\begin{itemize}
	\item[(I)] the parity of $\MUCZ(\gamma^k)$ is the same for all $k\geq 1$, or 
	\item[(II)]the parity for the even multiples $\MUCZ(\gamma^{2k}), k\geq1,$ disagrees  with the parity of the odd multiples $\MUCZ(\gamma^{2k+1}),  
						k\geq 1.$ 
\end{itemize}
All even iterations of type II orbits are called \textit{bad}  and  closed Reeb orbits that are not bad are called \textit{good}.  For any closed Reeb orbit $\gamma$, its degree is set as $|\gamma|:=\MUCZ(\gamma)+n-3$.  The cylindrical contact chain complex  $\C_* (M,\a)$ is the  $\mathbb{Q}$-module freely generated by the good  Reeb orbits graded by $|\cdot|$.   The differential $\partial$ counts holomorphic cylinders between the good orbits of degree difference $1$ in the \textit{symplectization} of $(M,\a)$, however, our results do not explicitly use $\partial$ so we  refer the reader to \cite{Bo:survey} for its definition. If there are no contractible closed Reeb orbits of degree $-1,0,$ and $1$, then cylindrical contact homology is defined and independent of $\a$ as well as the complex structure $J$.   To simplify notation, we write $\HC_{*}(M,\xi)$ for   cylindrical contact homology and use the notation $\HC^{cyl}_*(M,\xi)$ when we compare this with  other variants of contact homology.

\section{Asymptotically finite contact manifolds}

\subsection{Asymptotically finite contact manifolds} 
\label{sec:AF}
In this section,  all periodic Reeb orbits are assumed to be non-degenerate and contractible. See Section \ref{sec:remarks} for a discussion on classes of non-contractible Reeb orbits.

\vs
For a contact form $\a$ for $\xi$, consider the subset $\mathcal{P}^{d}(\alpha):=\{\gamma :  -d \leq |\gamma| \leq d \}$  of closed $R_{\a}$-orbits $\gamma$ and  a topological index of an orbit $\sigma(\gamma):=(-1)^{|\gamma|}$. Suppose there is a sequence of contact forms $\{\alpha _r\}$ for $\xi$ that satisfies the following.
\begin{itemize}

	\item [(AF1)] For each $\alpha_r$, there is a set of simple Reeb orbits $\{\gamma_1(r), 		
				 \ldots, \gamma_m(r)\}$,  where $m$  is independent of $\alpha_r$.
	
	\item [(AF2)] There is an increasing sequence of integers $\{d_r\}$ such that if $\gamma \in \mathcal{P}^{d_r}(\alpha_r)$, then 		
				$\gamma$ is an iteration of some $\gamma_i(r)$,
				i.e., $\gamma=\gamma_i(r)^k$.
	
	\item [(AF3)] The mean indices of each sequence converge:
				$\Delta(\gamma_i(r))\rightarrow \Delta_{i}$.
	
	\item [(AF4)] The sign $\sigma(\gamma_i(r))$ is independent of $r$.
\end{itemize}

\begin{definition}\label{def:AF}
The closed trajectories   $\{\gamma_1(r),\ldots, \gamma_m(r)\}$ of (AF$1$) are called the \textit{principal Reeb orbits} of $\a_r$,  the limits $\{\Delta_1, \cdots, \Delta_m\}$ are called \textit{asymptotic mean indices}, and set $\sigma_i:=\sigma(\gamma_i(r))$ for each $i$.  We say  the contact manifold $(M,\xi)$ is  \textit{asymptotically finite} and   write $(M,\xi)=\{(M,\a_r)\}$. 
\end{definition}

\subsection{Proof of Theorem \ref{thm:AFMEC}}
\label{sec:pf-AFMEC}

We establish the result for $\chi^+$ and the result for $\chi^-$ is proved similarly. Let $(M,\xi)=\{(M,\alpha_r)\}$ be asymptotically finite. Recall from Section \ref{sec:preliminaries} that there are two types of simple closed Reeb orbits. Distinguish the type I and II principal Reeb orbits in (AF1) by $x_i(r)$ and $y_i(r)$ respectively.  Also recall that  
\begin{equation}\label{eq:1} 
|\MUCZ(\gamma^k)-k\Delta(\gamma)|<n-1, 
\end {equation}
and hence
\begin{equation}\label{eq:2}
-2<|\gamma^k|-k\Delta(\gamma)<2n-4
\end {equation}
holds for any periodic orbit of a Reeb flow. 

Let $\{d_r\}$ be the increasing sequence for $(M,\xi)=\{(M,\alpha_r)\}$ satisfying (AF2).   The complex $\C_*(M,\alpha_r)$ is generated by all iterations of type I and odd iterations of type II Reeb orbits of $\alpha_r$. There are integers $l_+$ and $l_-$ such that $\C_l(M,\alpha_r)$ are finite dimensional whenever $l_+\leq l \leq d_r$ and $-d_r\leq l \leq l_-$. The integers can be taken as $l_+=2n-4$ and $l_-=-2$ which follows from \eqref{eq:2} and by the fact that there are finitely many principal Reeb orbits.  We can take $\{\a_r\}$ starting with $r$ such that $d_r\geq2n-4$. Denote by $\C_*^{d_r}(\alpha_r)$ the truncated complex $\C_*(M,\alpha_r)$ from below at $l_+$ and from above at $d_r.$  Set
\[
\C_l^{d_r}(\alpha_r)=
  \begin{cases}
		\C_l(M,\a_r) &\text{if $l_+\leq l \leq d_r$},\\
		0   &\text{otherwise}.
	\end{cases}
\]
The complex $\C_*^{d_r}(\alpha_r)$ is generated by the iterations $x_i(\alpha_r)^k$ and $y_i(\alpha_r)^k$ (for odd $k$) such that $|x_i(\alpha_r)^k|$ and $|y_i(\alpha_r)^k|$ ranges from $l_+$ to $d_r$. By \eqref{eq:2}, this can happen only when $\Delta(x_i(r))>0$ and $\Delta(y_i(r))>0$.  Using \eqref{eq:2} again, we see that orbits with positive mean indices $x_i(\alpha_r)^k$ and $y_i(\alpha_r)^k$ are in $\C_*^{d_r}$ for $k$ ranging between a constant (dependent on the orbit, but independent of $d_r$) up to roughly $d_r/\Delta(x_i(r))$ and $d_r/\Delta(y_i(r))$ up to a constant independent of $d_r$. 
The sequence $\{\alpha_r\}$ has the property that mean indices converge to $\Delta_{x_i}$ and $\Delta_{y_i}$ as $r\rightarrow\infty$. When $R$ is taken large enough, $k$ ranges roughly between some constant to roughly $d_r / \Delta_{x_i}$ and $d_r / \Delta_{y_i}$ whenever $r\geq R$. The parity of $|x_i(r)^k|$ and $|y_i(r)^k|$ are independent of $k$, $\sigma$ is independent of $r$ for each $i$, and so $\sigma(x_i(r)^k)=\sigma(x_i(r))=\sigma_{x_i}$ for all $k$ and $\sigma(y_i(r)^k)=\sigma(y_i(r))=\sigma_{y_i}$ for all $k$ odd.  Then for  large $R$, the contribution to the Euler characteristic 
\[
\chi(\C_*^{d_r}(\alpha_r)):=\sum (-1)^l \dim \C_l^{d_r}(\alpha_r)= \sum^{d_r}_{l=l_+} (-1)^l \dim \C_l^{d_r}(\alpha_r)
\]
by each $x_i(r)$ and $y_i(r)$ is $d_r\cdot\sigma_{x_i}/\Delta_{x_i}+O(1)$ and $(d_r\cdot\sigma_{y_i}/\Delta_{y_i}+O(1))/2$ for all $r \geq R$. 
Then we have 
\[
\chi(\C_*^{d_r}(\alpha_r))= d_r \Big({\sum}^+\frac{\sigma_{x_i}}{\Delta_{x_i}} + \frac{1}{2}{\sum}^+\frac{\sigma_{y_i}}{\Delta_{y_i}}\Big) + O(1), 
\]
and hence
\[
\lim_{r \to \infty} \frac{\chi(\C_*^{d_r}(\alpha_r))}{d_r}=
{\sum}^+\frac{\sigma_{x_i}}{\Delta_{x_i}} + \frac{1}{2}{\sum}^+\frac{\sigma_{y_i}}{\Delta_{y_i}}.
\]

To finish the proof, it remains to show that 
$$\chi^+(M,\xi)=\lim_{r \to \infty} \frac{\chi(\C_*^{d_r}(\alpha_r))}{d_r}.$$
By the definition of $\C_*^{d_r}$, we have $H_l\big(\C_*^{d_r}\big)=\HC_l(M,\xi)$ when $l_+ < l < d_r$. Furthermore,
$|H_{d_r}\big(\C_*^{d_r}\big)-\HC_{d_r}(M,\xi)|=O(1)$ since $\dim \C_{d_r}(M,\alpha_r)=O(1)$.
Hence, 
\[
\chi\big(\C_*^{d_r}\big)=\sum_l(-1)^l\dim H_l\big(\C_*^{d_r}\big)
=\sum_{l=l_+}^{d_r} (-1)^l\dim\HC_l(M,\xi)+O(1)
\]
and the MEC formula follows.  This completes the proof of the theorem.

\section{Weakly index-positive contact manifolds} 
\label{sec:WIP}
In this section, we introduce \textit{weakly index-positive} contact manifolds, a notion similar to Ustilosky's index-positivity where the \textit{unitary index} takes the role of the Conley-Zehnder index. This section includes  the definition of the \textit{unitary index} and a recollection of some useful properties which associates this index to its various relatives.  We also include  a discussion about  index-positivity.

\subsection{The unitary index for symplectic paths}
\label{sec:unitary index}
We give the definition of \textit{unitary index}, discuss some of its properties  and its relations to other indices.

\subsubsection{Definition of the unitary index}
\label{sec:UIdef}
Let $(V,\omega)$ be a symplectic vector bundle over a manifold $M$ of rank $2n$ and fix an $\omega$-compatible complex structure $J$ making $V$ a Hermitian vector bundle.  Denote by $\LNC V$ the top exterior complex power of $V$. Assume $c_1(V)=0$, then $c_1(\LNC V)=0$ and so this line bundle is trivial. Fix a section $\s$ of the unit circle bundle $S^1[(\LNC V)^{\otimes 2}]$ in $(\LNC V)^{\otimes 2}$, and then for $p\in M,$ choose a symplectic basis  of $V_p$  with respect to $\s$. This means to take a symplectic basis $\{e_1, \cdots, e_n, J_0 e_1, \cdots, J_0 e_n\}$ of $V_p$ such that $\{e_1, \cdots, e_n\}$ is a unitary frame that satisfies $(\Lambda^{n}_{j=1}e_j)^{\otimes 2}=\s(p)$. 

Consider a curve $\gamma\col[0,T]\rightarrow M$,  let $V_t$ denote $V_{\gamma(t)}\in V$, and denote by $Sp(V_t)$ the set of symplectic maps of $V_t$. For $t\in[0,T]$, suppose $\Psi(t)\col V_0 \rightarrow V_t$ is a continuous path of symplectic maps over $\gamma$ starting at the identity. Let $\{e_j, J_0 e_j\}^n_{j=1}$  and $\{f_j, J_t f_j\}^n_{j=1}$ be symplectic bases chosen with respect to $\s$ for $V_0$ and $V_t$, respectively. Suppose $A(t)\in Sp(V_t)$ represents $\{\Psi(t)(e_j),\Psi(t)(J_0 e_1)\}^n_{j=1}$ in the  basis $\{f_j, J_t f_j\}^n_{j=1}$. Next, take the polar decomposition $A(t)=P(t)U(t)$, where $P(t)$ and $U(t)$ are the  symmetric, positive-definite and unitary parts, respectively.  Define  $\theta(t)$  by $\det^{2}_{\comp}(U(t))= e^{2i\theta(t)}$. Note that $\theta(t)$ is continuous as it  continuously depends on $A(t)$, and $A(t)$ continuously depends on $\Psi_t$ and $\s$.  Also, note that $\theta(0)=0$  since~$A(0)=I$.

\begin{definition}
Let $\Psi(t)\col V_0\rightarrow V_t$ be a path of symplectic maps over a curve $\gamma\col [0,T]\rightarrow M$ starting at the identity. The \textit{unitary index of $\Psi$ with respect to  $\s$ } is defined by 
$$\mu(\Psi;\s)=\frac{\theta(T)}{\pi},$$ 
where $\theta(T)$ is defined as above. 
\end{definition}

The unitary index is well-defined: For another choice of unitary frames  $\{e'_j\}^{n}_{j=1}$ and $\{f'_j\}^{n}_{j=1}$ for $V_0$ and $V_t$, respectively, there are  change of basis matrices $B_e$ from $\{e_j\}^{n}_{j=1}$ to $\{e'_j\}^{n}_{j=1}$, and $B_f$ from $\{f_j\}^{n}_{j=1}$ to $\{f'_j\}^{n}_{j=1}$. We required 
$$(\Lambda^{n}_{j=1} e'_j )^{\otimes 2} = \s(0) \ \text{ and } \ (\Lambda^{n}_{j=1} f'_j)^{\otimes 2} = \s(t).$$
Both $B_e$ and $B_f$ have complex determinant $\pm1$. If $A'(T)$ is the change of basis between $\{\Psi(t)(e'_j)\}^{n}_{j=1}$ and $\{f'_j\}^{n}_{j=1}$ , then $\det^{2}_{\comp}A'(T)=\det^{2}_{\comp}A(T)$.

\begin{remark}
\label{remark:s}
Although this definition depends on the choice of $\s$, given any homotopic section, say $s'(x)=e^{if(x)}\cdot \s(x)$ where $f\col M \rightarrow \real$, the difference between $\mu(\Psi;\s)$ and $\mu(\Psi;\s')$ is bounded by a constant independent of $\gamma$.  Moreover, if $\gamma$ is a closed path, i.e., $\gamma(0)=\gamma(T)$ and hence $V_T=V_0$, 
then $\mu(\Psi;\s)=\mu(\Psi;\s')$. Indeed, we must have $f(\gamma(0))=f(\gamma(T))$ since $\s(\gamma(0))=\s(\gamma(T))$ and $\s'(\gamma(0))=\s'(\gamma(T))$. Also note that when $\pi_1 (M)=0$, all sections of $S^1 [(\Lnc\xi)^{\otimes 2}]$ are homotopic, and so we have the following:
\end{remark}

\begin{proposition} 
If $\pi_1(M)=0,$ then $\mu(\gamma;\s)$ is independent of the section $\s$ for any closed orbit $\gamma$.
\end{proposition}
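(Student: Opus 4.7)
The plan is to reduce the claim to Remark 4.2 (the case of homotopic sections, for which the equality $\mu(\gamma;\s)=\mu(\gamma;\s')$ on closed orbits was already observed) by showing that under $\pi_1(M)=0$ any two sections of $S^1[(\LNC V)^{\otimes 2}]$ are in fact homotopic.

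First I would set up the bundle-theoretic picture. Since we assume $c_1(V)=0$, the complex line bundle $\LNC V$ is topologically trivial, hence so is its square $(\LNC V)^{\otimes 2}$, and therefore its unit circle bundle $S^1[(\LNC V)^{\otimes 2}]$ is a trivial principal $S^1$-bundle over $M$, i.e.\ isomorphic to $M\times S^1$. After fixing one reference section, an arbitrary section is then encoded by a map $f\col M\to S^1$, and two sections $\s,\s'$ are homotopic as sections precisely when the corresponding maps are homotopic as maps $M\to S^1$.

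Next I would invoke the standard identification $[M,S^1]\cong H^1(M;\mathbb{Z})$, coming from the fact that $S^1=K(\mathbb{Z},1)$. Under the hypothesis $\pi_1(M)=0$, Hurewicz gives $H_1(M;\mathbb{Z})=0$, and by the universal coefficient theorem $H^1(M;\mathbb{Z})=0$. Consequently $[M,S^1]$ is trivial, so any two sections $\s$ and $\s'$ of $S^1[(\LNC V)^{\otimes 2}]$ are homotopic.

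Finally, I would apply Remark 4.2: for a closed curve $\gamma$, the comparison between $\mu(\gamma;\s)$ and $\mu(\gamma;\s')$ for homotopic sections reduces, in the notation of the remark, to checking $f(\gamma(0))=f(\gamma(T))$, which holds automatically because $\gamma(0)=\gamma(T)$. Hence $\mu(\gamma;\s)=\mu(\gamma;\s')$, proving the proposition. There is no real obstacle here; the only point deserving care is the passage from $\pi_1(M)=0$ to the triviality of $[M,S^1]$, but this is immediate from the Eilenberg--MacLane identification and the universal coefficient theorem.
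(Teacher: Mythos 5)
Your proof is correct and takes essentially the same route as the paper: Remark~\ref{remark:s} already contains the observation that homotopic sections give equal unitary index on closed orbits, together with the bare assertion that $\pi_1(M)=0$ forces all sections to be homotopic, and the proposition is stated as an immediate consequence. You simply fill in the justification of that assertion (triviality of $(\LNC V)^{\otimes 2}$, the identification $[M,S^1]\cong H^1(M;\mathbb{Z})$, and Hurewicz with universal coefficients), which the paper leaves implicit.
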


\subsubsection{Related indices}
\label{sec:indices 2}
The unitary index is a vector bundle version of a quasimorphism defined for symplectic transformations of a symplectic vector space considered by Dupont \cite{Du}, and  Guichardet and Wigner \cite{GW}. We can present this quasimorphism for symplectic maps of the standard symplectic vector space $(\real^{2n},\omega_0)$ with a compatible complex structure $J_0$. Let $\Phi_0 =\{e_1, \cdots, e_n\}$ be the standard unitary frame and take a map $A\in Sp(2n)$. Let  $A=UP$ be the polar decomposition where $U$ is unitary and $P$ is symmetric, positive-definite.  This index is set as  $\DGW(A):=\D(U)$. We recall some properties associated with~$\DGW$.

\vs
\begin{itemize}
	\item[$\ic$] The index $\DGW$ is a quasimorphism: 
							 $$|\DGW(AB)-\DGW(A)-\DGW(B)|\leq K,$$
							 where the constant $K$ depends on $n$; see \cite{Du}.

\vs

	\item[$\id$] The mean index is related to $\DGW$ via the following: 
							 $$|\DGW(A)-\D(A)|\leq C,$$
							 where $C$ is a constant that only depends on $n$; see \cite{BG}. 

\vs

	\item [$\ie$]  Let $\s_0=(\Lambda^{n}_{j=1} e_j)^{\otimes 2}$. 
								 For  a non-degenerate path of symplectic transformations $A(t)\in Sp(2n)$ such that $A(0)=I$, 
								 the following holds:
								 $$|\mu(A;\s_0)-\MUCZ(A;\Phi_0)|\leq C',$$
								 where $C'$ is a constant that only depends on $n$; see below.
\end{itemize}
Recall that for a non-degenerate symplectic path  $A(t) \in Sp(2n)$,  we also have properties $\ia$ and $\ib$ relating $\MUCZ(\Psi)$ and $\Delta(\Psi)$ given in Section \ref{sec:preliminaries}. 
By $\ia, \id$, and  the observation that  $\mu(A;\s_0)=\DGW(A)$, the relation $\ie$  follows.  

We also mention a version of $\ie$ holds for degenerate paths of symplectic transformations. In that case, replace $\MUCZ$ with the generalized Conley-Zehnder index. 

\subsubsection{The unitary catenation lemma}
\label{sec:cat lemma}

Again consider the setting of Section \ref{sec:UIdef}: Let $V$ be a symplectic vector bundle  over a manifold $M$ with $c_1(V)=0$, fix a section $\s$ of $S^1[(\LNC V)^{\otimes2}]$. Let $\gamma\col[0,T]\rightarrow M$ be a curve on $M$ and  $\Psi(t)\col V_0 \rightarrow V_t$ be a path of symplectic maps over $\gamma$ starting at the identity. For $0<T_1<T$, we have 
$$\mu(\Psi,\s)=\mu(\Psi|_{[0,T_1]};\s)+\mu(\Psi|_{[T_1,T]};\s).$$

On the otherhand, suppose for some $T_1\in[0,T]$, we have paths of symplectic maps $\Psi_1(t)$ and $\Psi_2(t)$ over $\gamma_1=\gamma|_{[0,T_1]}$ and $ \gamma_2=\gamma|_{[T_1,T]}$, respectively. If both $\Psi_1(t)$ and $ \Psi_2(t)$  start at the identity,  their \textit{catenation} is defined as
\[
(\Psi_1 * \Psi_2)(t) = \begin{cases}	\Psi_1(t), & 0 \leq t \leq T_1,\\
																	\Psi_2(t)\Psi_1(T_1), & T_1 \leq t \leq T.
												 \end{cases}
\]

\begin{lemma}
[$\mu$-Catenation lemma]
For the paths of symplectic maps $\Psi_1$ and $\Psi_2$ as above, we have
$$|\mu(\Psi_1 * \Psi_2;\s) - \mu(\Psi_1;\s)-\mu(\Psi_2;\s)|\leq b,$$ where $b$ is a constant that depends on the rank of $V$.
\end{lemma}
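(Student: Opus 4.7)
The strategy is to use the section $\s$ to trivialize $V$ along $\gamma$ and thereby reduce the statement to the quasimorphism inequality $\ic$ for $\DGW$ on the symplectic group $Sp(2n)$. To set up the trivialization, first choose a continuous unitary frame $\{e_j^{(t)}\}_{j=1}^{n}$ of $V_{\gamma(t)}$ satisfying $(\Lambda_{j=1}^{n} e_j^{(t)})^{\otimes 2}=\s(\gamma(t))$ for every $t\in[0,T]$. The bundle of $\s$-compatible unitary frames along $\gamma$ is an $SU(n)$-principal bundle over the contractible base $[0,T]$, hence trivializable, so such a frame exists.

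This frame identifies each $V_{\gamma(t)}$ with $(\real^{2n},\omega_0)$ and converts $\Psi_1$, $\Psi_2$, and $\Psi_1*\Psi_2$ into paths $A_1$, $A_2$, and $A_1*A_2$ in $Sp(2n)$ all starting at the identity. Because $\mu$ depends only on the polar decomposition and on $\det^{2}_{\comp}$ of the matrix representation in an $\s$-compatible frame, this reduction preserves the index:
\[
\mu(\Psi_i;\s)=\mu(A_i;\s_0)\quad\text{and}\quad \mu(\Psi_1*\Psi_2;\s)=\mu(A_1*A_2;\s_0),
\]
where $\s_0$ is the standard section on $(\real^{2n},\omega_0)$.

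It then suffices to prove the catenation bound in $Sp(2n)$. The catenated path $A_1*A_2$ ends at $A_2(T)A_1(T_1)$, and its lift to the universal cover $\widetilde{Sp}(2n)$ starting at $\tilde{I}$ is the product $\tilde{A}_2(T)\cdot \tilde{A}_1(T_1)$, where $\tilde{A}_i$ denotes the lift of $A_i$ from $\tilde{I}$. On such lifted endpoints, $\mu(\,\cdot\,;\s_0)$ agrees up to a universal rescaling with $\DGW$ regarded as the continuous lift of $\arg\det^{2}_{\comp}$ of the unitary part. Applying the quasimorphism property $\ic$, with constant $K=K(n)$, to $\tilde{A}_2(T)\cdot \tilde{A}_1(T_1)$ then yields
\[
|\mu(A_1*A_2;\s_0)-\mu(A_1;\s_0)-\mu(A_2;\s_0)|\leq b,
\]
with $b$ depending only on $n$. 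Combined with the preceding reduction, this proves the lemma. The main technical point I anticipate is the careful identification of the bundle-level $\mu$ (defined via $\s$-compatible polar decompositions) with an appropriate rescaling of $\DGW$ on $\widetilde{Sp}(2n)$ under the trivialization; once this correspondence is in place, the catenation bound is an immediate consequence of $\ic$.
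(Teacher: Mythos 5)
Your proposal is correct and follows essentially the same route as the paper: both reduce the bundle-level statement to the quasimorphism inequality $\ic$ for $\DGW$ by trivializing $V$ along $\gamma$ with $\s$-compatible unitary frames, and then observe that the catenated path corresponds to the product $A_2(T)A_1(T_1)$. The only cosmetic difference is that the paper phrases the reduction via a change-of-basis matrix $B(T) \in Sp(V_T)$ (the conjugate of $A_1(T_1)$ by $\Psi_2(T)$, with $\DGW(B(T))=\DGW(A_1(T_1))$ by conjugation-invariance) rather than explicitly invoking the universal cover $\widetilde{Sp}(2n)$, but the underlying argument is the same; also note that the structure group of $\s$-compatible frames is $\{U\in U(n): \det_{\comp}U=\pm1\}$ rather than $SU(n)$, though this does not affect triviality over $[0,T]$, and the "universal rescaling" you anticipate is in fact the identity, since the paper establishes $\mu(A;\s_0)=\DGW(A)$ directly.
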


\begin{proof}
Set $\Psi(t):=\Psi_2(t)\circ\Psi_1(T_1):V_0 \rightarrow V_t$ and suppose $A(t)$ are symplectic matrices representing $\Psi(t)$ with respect to $\s$. Note that $$\mu(\Psi_1 * \Psi_2;\s)=\DGW(A(T))=\mu(\Psi;\s).$$ 
The lemma holds  by the following claim:
$$|\mu(\Psi;\s)-\mu(\Psi_1;\s)-\mu(\Psi_2;\s)|\leq b,$$ 
where the constant $b$ depends on the rank of $V$.

Fix a Hermitian basis $\{e_j\}^{n}_{j=1}, \{e'_j\}^{n}_{j=1}, \{f_j\}^{n}_{j=1}$ for $V_0, V_{T_1}, V_T$ with respect to~$\s$. In this fixed basis, let $A_1(t), A_2(t), A(t)\in Sp(V_t)$  represent $\Psi_1(t)$, $\Psi_2(t)$, $\Psi(t)$, and denote their unitary parts by $U_1(t), U_2(t), U(t)$, respectively. Let $\theta_{1}(t),\theta_{2}(t),\theta(t)$ be  maps  defined by $\det^{2}_{\comp}(U_1(t))=e^{i\theta_1(t)}$, $\det^{2}_{\comp}(U_2(t))=e^{i\theta_2(t)}$, $\det^{2}_{\comp}(U(t))=e^{i\theta(t)}$, respectively.  Then, 
$$ \mu(\Psi_1(t);\s)=\DGW(A_1(T_1))=\frac{\theta_1(T_1)}{\pi},\ \mu(\Psi_2(t);\s)=\DGW(A_2(T))=\frac{\theta_2(T)}{\pi},$$ 
$$ \text{ and } \ \mu(\Psi(t);\s)=\DGW(A(T))=\frac{\theta(T)}{\pi}.$$

\noindent Denote by $B(t)$, the change of basis from 
$\{\Psi_2(t)(e'_j),\Psi_2(t)(J_{t} e'_j)\}^{n}_{j=1}$ to $\{\Psi_2(t)\circ \Psi_1(T_1)(e_j),\Psi_2(t)\circ \Psi_1(T_1)(J_0 e_j)\}^{n}_{j=1}$. 
Then $A(t)\circ B(t)=A_2(t)$.

\vspace{3mm}
\[
\xymatrix@R=7ex{
\txt{$V_0$\\$\{e_j\}$, \\$(\Lambda^n_{\comp} e_j)^{\otimes 2} = \s(\gamma(0))$}
\ar@<1.5ex>[r]
\ar[dr]_{\Psi_1} 
&
\txt{$V_{T_1}$\\$\{e_j^\prime\}$, \\$(\Lambda^n_{\comp} e_j^\prime)^{\otimes 2} = \s(\gamma(T_1))$}
\ar@<1.5ex>[r]
\ar[dr]_{\Psi_2}
\ar[d]_{A_1(T_1)} 
&
\txt{$V_T$\\$\{f_j\}$, \\$(\Lambda^n_{\comp} f_j)^{\otimes 2} = \s(\gamma(T))$}
\ar[d]_{A_2(T)}
\ar@/^3pc/[dd]^{A(T)}
\\
&
\{\Psi_1(T_1)(e_j)\}
\ar[dr]_{\Psi_2} 
&
\{\Psi_2(T)(e_j)\}
\ar[d]_{B(T)}
\\
&
&
\{\Psi_2(T)(\Psi_1(T_1)(e_j))\}
}
\]
\vspace{3mm}

The quasimorphism property $\ic$ of $\DGW$   holds for symplectic maps on any symplectic vector space. In particular, we apply $\ic$ for maps in $Sp(V_T)$, and so $$|\DGW(A(T))-\DGW(A_1(T_1))-\DGW(B(T))|\leq b_{V_T},$$ where $b_{V_T}$ is a constant that depends only on the rank of $Sp(V_T)$. Then since $\DGW(B(T))=\DGW(A_1(T_1))$ and the rank of $Sp(V_p)$ is the same for all $p\in M$, we have  $$|\mu(\Psi;\s)-\mu(\Psi_1;\s)-\mu(\Psi_2;\s)|\leq b,$$ where $b$ depends on the rank of $V$.

\end{proof}

\eject 
\subsection{Weakly index-positive contact manifolds}
In this section, we define \textit{weak index-positive} contact manifolds and discuss Ustilosky's  \textit{index-positivity}.

\subsubsection{Weak-index positivity}
\label{sec:WIP-def}
Let $(M,\xi)$ be a contact manifold  with $c_1(\xi)=0$  and fix a section $\s$ of $S^1[(\Lnc\xi)^{\otimes 2}]$.  Let $\a$ be a contact form for $\xi$, 
denote  the Reeb vector field by $R_{\a}$, let $x\col[0,T]\rightarrow M$ be a Reeb trajectory, and set $\Psi(t):=d\varphi^t_x\col\xi_{x(0)}\rightarrow \xi_{x(t)},$ for $t\in[0,T]$.  Suppose $A(t)\in Sp(\xi_{x(t)})$ represents $\Psi(t)$ with respect to $\s$, for each $t\in[0,T]$, and denote the unitary part of $A(t)$ by $U(t)$. Define  the map $\theta(t)$  by $\det^{2}_{\comp}(U(t))=e^{i\theta(t)}$. 

\begin{definition} 
The \textit{unitary index} of a Reeb trajectory $x\col[0,T]\rightarrow M$  with respect to $\s$ is defined to as
 $$\mu(x;\s):=\mu(\Psi;\s):=\frac{\theta(T)}{\pi}.$$
\end{definition}

\begin{definition}
A contact form $\a$ for $(M,\xi)$  is said to be \textit{weakly index-positive with respect to $\s$} if there exists constants $\k>0,\kk$ such that $$\mu(\gamma;\s)\geq \k\A(\gamma) + \kk,$$ for any Reeb trajectory $\gamma$ with action $\A(\gamma)$.  We call $(M,\xi)$\textit{ weakly index-positive} if $\xi=\ker(\a)$,  for some  contact form $\a$ that is  weakly index-positive with respect to some section $\s$.  (\textit{Weak index-negativity}  can be defined similarly.)
\end{definition}

\begin{remark} 
The unitary index depends on the choice of a section of $S^1[(\Lnc(\xi)^{\otimes 2}]$ and therefore weak index-positivity also depends on this choice.  However,  for any two homotopic sections $\s$ and $\s'$, Remark \ref{remark:s} gives us  $|\mu(x;\s)-\mu(x;\s')|\leq c$  for some  constant $c$ that depends only on the sections. Suppose $(M,\a)$ is weakly index-positive with respect to some  $\s$. When $\pi_1(M)=0,$ all sections are homotopic, and here $(M,\a)$  is weakly index-positive for any $\s$. If $\pi_1(M)\neq0$, we just need to specify the homotopy class of $\s$.
\end{remark}

\noindent Weak index-positivity is also stable under small perturbations. 
\begin{lemma}
\label{lemma:WIP-pert}
If $(M,\xi\in ker(\a))$ is weakly index-positive with respect to $\s$, then for a $C^{\infty}$-small perturbation $\a'$ of $\a$, the manifold $(M,\a')$ is also weakly index-positve with respect to $\s$.
\end{lemma}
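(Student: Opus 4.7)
The plan is to invoke the $\mu$-catenation lemma to reduce the estimate to a short-time comparison between Reeb trajectories of $\a$ and $\a'$, where the unitary indices are close by continuity. Let $\gamma'$ be an $R_{\a'}$-trajectory with action $T' = \A'(\gamma')$ (equal to its $R_{\a'}$-period). Fix $T_0 > 0$ to be chosen shortly, write $T' = NT_0 + r$ with $0 \leq r < T_0$, and decompose $\gamma'$ into $N$ consecutive pieces $\gamma'_k$ of length $T_0$ starting at $p_k := \gamma'(kT_0)$, together with a remainder $\gamma'_r$ of length $r$. Iterating the $\mu$-catenation lemma yields
\[
\mu(\gamma';\s) \;\geq\; \sum_{k=0}^{N-1}\mu(\gamma'_k;\s) + \mu(\gamma'_r;\s) - Nb,
\]
where $b$ is the constant from that lemma.

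Next I would compare each piece $\gamma'_k$ with the $R_\a$-trajectory $\gamma_k$ of the same time length $T_0$ starting at the same point $p_k$. Since $M$ is compact and the Reeb vector field and its linearization depend continuously on the contact form in $C^\infty$, for any $\delta > 0$ there exists $\epsilon > 0$ such that $\|\a' - \a\|_{C^\infty} < \epsilon$ forces
\[
|\mu(\gamma'_k;\s) - \mu(\gamma_k;\s)| < \delta \quad \text{uniformly in } p_k \in M.
\]
Here one uses that the endpoints $\gamma'_k(T_0)$ and $\gamma_k(T_0)$ are close (so the unitary frames determined by $\s$ are close), and that the symplectic paths, whence their unitary parts and the arguments of $\det^2_{\comp}$, depend continuously on the endpoint data. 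Weak index-positivity of $\a$ gives $\mu(\gamma_k;\s) \geq \k T_0 + \kk$, hence $\mu(\gamma'_k;\s) \geq \k T_0 + \kk - \delta$.

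Now I choose $T_0$ large enough that $\k T_0 + \kk - b \geq \k T_0/2$, then fix $\delta = \k T_0/4$, and shrink $\epsilon$ so the preceding uniform estimate holds for $\|\a' - \a\|_{C^\infty} < \epsilon$. Since short $R_{\a'}$-trajectories have uniformly bounded unitary index, say $|\mu(\gamma'_r;\s)| \leq C_0$ (again by compactness of $M$ and continuity of the flow), the catenation bound becomes
\[
\mu(\gamma';\s) \;\geq\; N\cdot \frac{\k T_0}{4} - C_0 \;\geq\; \frac{\k}{4}(T' - T_0) - C_0 \;=\; \frac{\k}{4}\A'(\gamma') + \kk',
\]
using $NT_0 \geq T' - T_0$, with $\kk' := -\k T_0/4 - C_0$. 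Trajectories with $T' < T_0$ have $\mu(\gamma';\s)$ bounded below by a constant (once more by continuity and compactness), so the inequality persists after decreasing $\kk'$ if necessary. If $\ker(\a') \neq \xi$, one first transports $\s$ to a section of $(\Lnc \ker(\a'))^{\otimes 2}$ via the $C^\infty$-close symplectic identification $\ker(\a') \cong \xi$ (for instance by projection along $R_\a$), which alters every estimate by an arbitrarily small amount.

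The main technical obstacle is the uniform continuity statement in the second paragraph: one must ensure that $|\mu(\gamma'_k;\s) - \mu(\gamma_k;\s)| < \delta$ holds uniformly as $p_k$ ranges over all of $M$. This is a compactness-plus-continuity argument, in which the unitary index is built from the polar decomposition of a matrix representing the linearized flow in $\s$-compatible unitary frames at the two endpoints, and every ingredient depends continuously on $(p, \a')$ over the compact set $M \times \{\a' : \|\a' - \a\|_{C^\infty} \leq \epsilon\}$. The remaining bookkeeping is routine.
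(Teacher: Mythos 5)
Your proof is correct and follows the same overall strategy as the paper: decompose a long $R_{\a'}$-trajectory into pieces of fixed time length, compare each piece with the $R_\a$-trajectory issuing from the same initial point, and reassemble via the $\mu$-catenation lemma. The one genuine difference is in how the short-time comparison is established. The paper forms the path $\Gamma = \tilde\gamma * \gamma'$ from the time-reversed $\a$-piece followed by the $\a'$-piece, observes that the family $\Psi_s = \Psi'(s)\Psi(s)^{-1}$ stays $C^0$-close to the identity for $s \in [0,T]$, and reads off an explicit bound $|\mu(\Gamma;\s)| < 2n$ from the polar decomposition (the eigenvalues of the unitary part cannot cross $-1$), whence $|\mu(\gamma;\s)-\mu(\gamma';\s)| \leq 2n + k$. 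You instead invoke continuous dependence of the linearized Reeb flow on $(p_k, \a')$ over a compact parameter set to make $|\mu(\gamma'_k;\s)-\mu(\gamma_k;\s)|$ as small as desired; this is a softer argument that yields an $\varepsilon$-$\delta$ estimate rather than a fixed bound depending on $n$, but both serve the same purpose and both are correct. One small imprecision worth fixing: the unitary index is a path quantity (it tracks a continuous lift of the argument of $\det^2_{\comp}$), so the phrase ``depend continuously on the endpoint data'' should read that the full symplectic paths, not just their endpoints, depend continuously on the initial point and the contact form, uniformly in $t \in [0,T_0]$ --- the rest of your argument already uses this correctly. Your extra care about transporting $\s$ when $\ker(\a') \neq \xi$ is a reasonable addition not spelled out in the paper.
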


\begin{proof}
Let $\k>0, \kk$ be  constants such that $\mu(x;\s)>\k\A(x)+\kk$, for any $R_{\a}$-trajectory $x$. Fix $T>0$ and let $\gamma, \gamma'$ be trajectories of $R_{\a}, R_{\a'}$, respectively, such that $\gamma(0)=\gamma'(0)$ and $\A(\gamma)=\A(\gamma')<T$. Denote by $\Psi, \Psi'$ the linearized Reeb flows of $R_{\a}, R_{\a'}$ along $\gamma, \gamma'$, respectively. Let $\tilde{\gamma}(t):=\gamma(T-t)$ be the inverse path of $\gamma$. Then the inverse path of $\Psi$ is $\tilde{\Psi}(t):=\Psi(T-t)\Psi(t)^{-1}$ over $\tilde{\gamma}$. Consider the path composition of  $\tilde{\gamma}$  and $\gamma'$:

\[
\Gamma(t):= \begin{cases}	\tilde{\gamma}(t),  & 0 \leq t \leq T,\\
														\gamma'(t-T),     & T \leq t \leq 2T.
						\end{cases}
\]
We obtain a symplectic path $\Psi_{\Gamma}$ over $\Gamma$ by taking the catenation:
\[
\Psi_{\Gamma}(t):= (\tilde{\Psi} * \Psi' )(t) = \begin{cases} \tilde{\Psi}(t),       & 0 \leq t \leq T,\\
																														\Psi'(t-T)\Psi(T)^{-1},  & T \leq t \leq 2T.
																							  \end{cases}
\] 
Also, consider the following symplectic maps. For each $0 < s < T$, restrict  $\Gamma$ to $t\in[s,2T-s]$, and set
$$\Psi_s:= \Psi'(s)\Psi(s)^{-1}\col\xi_{\gamma(s)}\rightarrow \xi_{\gamma'(s)}.$$ 
Note that $\Psi_T = \Psi_{\Gamma}(2T)$, and so $\mu(\Gamma;\s)=\DGW(\Psi_T)$. 

Let $A_{\Psi}(t)$ and $ A_{\Psi_s}$ be  symplectic matrices representing $\Psi_{\Gamma}(t)$ and $\Psi_s$, respectively, expressed in  bases chosen by $\s$. If the perturbation is small enough, we have $|\mu(\Gamma;\s)|\leq 2n$. To see this, consider the polar decompostion  $A_{\Psi_s}=P_{\Psi_s}U_{\Psi_s}$. Let $\textsc{I}_p$ be the identity at $p\in M$ with respect to $\s_p$. When $s=0$, the unitary part is $U_{\Psi_0}=\textsc{I}_{\gamma'(0)}=\textsc{I}_{\gamma(0)}$, and for each $s\in [0,T]$, the matrix $P_{\Psi_s}$ is symmetric, positive-definite. For small perturbations, we have 
$$||\textsc{I}_{\gamma'(s)}- A _{\Psi_s}||<1 \ \text{for each } s\in[0,T].$$ 
The $n$ complex eigenvalues of $U_{\Psi_s}$ must then stay between $e^{-i\pi}$ and $e^{i\pi}$ for  $s\in[0,T]$,  and so $-2n<\mu(\Gamma;\s)<2n$.

By the $\mu$-catenation lemma and the fact that $\mu(\tilde{\gamma};\s)= -\mu(\gamma;\s)$, we get
$$|\mu(\gamma;\s)-\mu(\gamma';\s)|\leq 2n + k,$$ 
for a constant $k$ depending on $n$. Then 
$$\mu(\gamma';\s) \geq \mu(\gamma;\s)-2n - k \geq \k\A(\gamma)+\kk - 2n - k.$$

Next, suppose  $\gamma'$ is a trajectory of $R_{\a'}$ with action greater than $T$.  We can break $\gamma'$ into $p$ pieces, $\gamma'_1,\ldots, \gamma'_p$, such that each $\A(\gamma'_i)<T$ and $p-1 < \A(\gamma)/T.$ Again by the $\mu$-catenation lemma,

\begin{align*}
\mu(\gamma';\s) & \geq \sum_{i=1}^{p} \mu(\gamma'_i)-k(p-1)\\
								& \geq \sum_{i=1}^{p}(\k\A(\gamma'_i)+\kk-2n-k)-k(p-1)\\
								& \geq \k\A(\gamma')-|\kk|p - 2np -2pk -k\\
								& \geq \k\A(\gamma')-(|\kk|+2n+2k)\big(\frac{\A(\gamma')}{T}+1 \big)+k\\
								& \geq \big(\k-\frac{|\kk|+2n+2k}{T}\big)\A(\gamma')-|\kk|-2n-k.\\
\end{align*}
All  constants  in the last line are independent of $\a'$ so we can choose any $T>(|\kk|+2n+2k)/\k$. We can take 
$T=2(|\kk|+2n+2k)/\k$ and set $\k'=\k/2$ and $\kk'=-|\kk|-2n-k$. 

\end{proof}

\begin{remark} 
For a small perturbation, there is a one-to-one correspondence
between non-degenerate Reeb orbits of the contact form and of the perturbed
contact form up to some period.  The corresponding
orbits have the same Conley-Zehnder indices.
\end{remark}

\subsubsection{Index-positivity} 
\label{sec:IP}
In the setting of \cite{U}, Ustilosky assumes that $\pi_1(M)=0$, the bundle $(\xi,d\a)$ is stably trivial, and all closed Reeb orbits are nondegenerate. For a stable trivialization $\Phi\colon\xi\oplus\epsilon^2\rightarrow\epsilon^{2n}$, there is a way to define the (generalized) Conley-Zehnder index $\MUCZ(\gamma;\Phi)$ of any Reeb trajectory $\gamma$, not necessarily closed nor non-degenerate.  If $\gamma$ is closed, then $\MUCZ(\gamma;\Phi)$ is equal to the usual  Conley-Zehnder index  and is independent of $\Phi$. For all other trajectories, this index does generally depend on the stable trivialization. However,  given any other stable trivialization $\Phi'$, it is shown that $|\MUCZ(\gamma;\Phi)-\MUCZ(\gamma;\Phi')|$ is bounded by some  constant independent of $\gamma$. With this in mind, the following condition  on  contact forms is independent of $\Phi$.

\begin{definition}
The manifold $(M,\a)$ is called \textit{index-positive} if there exists constants $\kappa_1>0, \kappa_2$ such that $$\MUCZ(\gamma;\Phi)\geq\kappa_1\A(\gamma)+\kappa_2,$$ 
for any  Reeb trajectory $\gamma$. (\textit{Index-negativity} can be similarly defined.)
\end{definition}

\begin{remark} 
\label{rmk:IP}
An index-positive contact manifold $(M,\alpha)$ is also weakly index-positive: Let $\s_{\Phi}$ be the section of $S^1[(\Lnc\xi)^{\otimes 2}]$ obtained by  $\Phi$. Given by $\ie$ from Section  \ref{sec:indices 2}, $|\mu(\gamma;\s_{\Phi})-\MUCZ(\gamma;\Phi)|\leq C'$, for some constant $C'$ independent of $\gamma$.
\end{remark}

\section{Handle attaching} 
\label{sec:handles}

\subsection{Contact surgery}
\label{sec:surgery}
Theorem \ref{thm:AFSHA} concerns the effect of a subcritical surgery on a contact manifold  and the mean Euler characteristic. Very roughly speaking, we glue to $(M^{2n-1},\xi)$ a component $D^k \times S^{2n-k-1}$ of the contact boundary of a symplectic $k$-handle $D^k \times D^{2n-k}$ modelled in $\real^{2n}$. Similar to surgery, this  hypersurface is  determined by a Morse function of index $k$; see for instance \cite{Ge:contact top, W}. \textit{Subcritical} here means that $k<n-1$.  The specific model  used in this paper is constructed in \cite{Yau}, where  the handle is presented with a focus on the dynamics of the Reeb orbits. We give only a rough  description of this model here, some contact boundary properties of these handles are included in the next section, and the reader is referred to \cite{Yau} for complete details.

Consider the standard symplectic space $(\real^{2n},\omega_0)$,  and for $0\leq k\leq n$, take the decomposition $\real^{2n}=\real^k \times \real^k \times \real^{2n-2k}$. Write $(x,y,z)$ for the coordinates  $x=(x_1,\ldots,x_k),$ $ y=(y_1,\ldots,y_k),$ and $ z=(z_{k+1},\ldots,z_n),$ where $z_l=x_l + iy_l,$ for $l=k+1,\ldots,n$. Fix $k$ and set 
$$f(x,y,z)=|x|^2 -\frac{1}{2}|y|^2 +\frac{1}{4}|z|^2.$$
Note that at the origin $\textbf{0}\in \real^{2n}$ is the only critical point of $f$ and its Morse index is $k$. The gradient vector field of $f$ with respect to the Euclidean metric, $Y:=\nabla f$, defines a Liouville vector field on   $(\real^{2n},\omega_0)$.  Set $\a:=\iota_Y \omega_0$;  this 1-form restricts to  a contact form on any hypersurface of $(\real^{2n},\omega_0)$ transverse to $Y$.

Now consider a similar Morse function  
\begin{equation}
\label{eq:handle}
h(x,y,z)=b|x|^2 -b'|y|^2 + \sum^{n}_{l=k+1}\frac{|z_l|^{2}}{c_l}, \text{ \ for positive constants \ } b>b',c_l. 
\end{equation}
A symplectic handle $\tilde{\H}$ is  the locus of points $(x,y,z)\in \real^k \times \real^k \times \real^{2n-2k}$ satisfying 
$$c_{-}\leq h(x,y,z) \leq c_{+},\ \text{\ for some constants \ }\ c_-<0 \ \text{ and }  \ c_+>0.$$
Topologically, this is $D^k \times D^{2n-k}$ and $(S^{k-1}\times D^{2n-k}) \cup (D^k\times S^{2n-k-1})$ is its boundary.

The symplectic handle is chosen such that its boundary  is transverse to $Y$, and so  $\a$  defines a contact form on $\partial\tilde{\H}$.  The \textit{lower boundary} $\{h=c_-\}$, which corresponds to $S^{k-1}\times D^{2n-k}$,  contains the isotropic $(k-1)$-sphere $S^{k-1}_{-}:=S^{k-1}\times ~\{0\}$. When performing contact surgery,  $S^{k-1}_{-}$ serves as the attaching sphere.  The  \textit{upper boundary} $\H:=\{h=c_{+}\}$, which corresponds to $ D^k\times S^{2n-k-1}$, will be the contact boundary component we glue to a contact manifold during surgery.  Denote by $\a_{\H}$, the restriction of $\a$ to $\H$. All closed Reeb orbits of $(\H,\a_{\H})$ are contained in the contact ellipsoid  $S^{2n-2k}_{*}:=\{0\}\times\{0\} \times S^{2n-2k}$, which is also contained in the coisotropic ellipsoid  $S^{2n-k-1}_{+}:=\{0\}\times S^{2n-k-1}$, called the \textit{belt} of the symplectic handle. 

We have freedom in the choice of shape of our contact handles which is determined by the coefficients $b, b'$ and $c_l$, of the handle's defining function $h$. This is due to the fact that the Liouville flow produces a contactomorphism  from the upper and lower boundaries of $\tilde{\H}$, to the upper and lower boundaries of  any symplectic handle of some other shape (with boundary transversal to $Y$).  This flow also preserves the submanifolds $S^{k-1}_{-}, S^{2n-k-1}_{+}$, and $S^{2n-2k}_{*}$. 

Now suppose we want to perform a contact surgery of index $k$ to  a contact manifold $(M,\xi=\ker(\a))$. First, recall just the topological aspect of surgery; the procedure is performed along an (embedded) $(k-1)$-sphere $S^{k-1}_{M}$ of $M$ whose normal bundle is trivial.  This means that a tubular neighborhood of the sphere  $\mathcal{N}(S^{k-1}_M) \cong~S^{k-1}\times D^{2n-k}$ is cut out of $M$ and $\H$ is glued in. Let $M'$ denote the resulting manifold. Next, to guarantee that a surgery induces a contact structure on $M'$, we use the Liouville flows of the symplectic handle and the symplectization of $(M,\a_{M})$; this is where the isotropic requirement of the attaching spheres steps in. Suppose $M$ has an (embedded) isotropic $(k-1)$-sphere $S^{k-1}_{M}$. For an isotropic sphere $S$ of either $M$ or $\H$, the normal bundle of $S$  breaks down  into a direct sum $\left\langle -R\right\rangle\oplus J(TS)\oplus \textsc{CSN}(S),$ where $R$ denotes either $R_{\a}$ or $R_{\a_{\H}}$, and $\textsc{CSN}(S)$ stands for the \textit{conformal symplectic normal} bundle of $S$. This bundle is $\textsc{CSN}(S):=(TS^{k-1})^{\omega}/TS^{k-1}$, where $(TS^{k-1})^{\omega}$ is the $\omega$-complement of $TS$, for $\omega=d\a$ or $d\a_{\H}$. Now since $\left\langle -R\right\rangle\oplus J(TS)$ has a natural framing,  we are left to consider  $\textsc{CSN}(S^{k-1}_{M})$ and $\textsc{CSN}(S^{k-1}_{-})$. An arbitrary framing can be chosen for $\textsc{CSN}(S^{k-1}_{-})$. When $\textsc{CSN}(S^{k-1}_{M})$ is trivial, a framing of $\textsc{CSN}(S^k_M)$  allows us to make the  identification of neighborhoods of the isotropic spheres (via the Liouville flow $Y$). The Liouville vector field is transverse to $M'$ and thus this new manifold  indeed carries a contact structure as a result of this surgery.

\subsection{The contact boundary of a subcritical symplectic handle}
\label{sec:handle props}

We are interested in the effect of a subcritical surgery on a contact manifold. This means that the index of the surgery $k$ is strictly less than $n$, where $2n-1$ is the dimension of the contact manifold. Recall that a component of the contact boundary of a symplectic $k$-handle will  be glued to $(M,\xi)$ to obtain a new contact manifold; see Section \ref{sec:surgery}. We remind the reader that the specific handle model used here is constructed in \cite{Yau}. Let $(\H,\xi_{\H}=\ker(\a_{\H}))$ denote this contact boundary component and denote its Reeb field by $R_{\H}$.

\begin{theorem}[Theorem 3.1 of \cite{Yau}] 
\label{thm:Yau CC} 
Suppose $\H$ is subcritical, i.e., $k<n$. 
\begin{enumerate}
	\item[(i)] There are $n-k$ simple Reeb orbits of $\a_{\H}$, all of which are good. 
	\item[(ii)] Given $m>0$, a deformation of $\H$ can be made such that for $*<m$, 
	 $$\rk(\C_*(\H,\a_{\H}))= \begin{cases} 1, &\mbox{ if } *=2n-k-4+2i, \text{ for some } i\in \mathbb{N}, \\
																				 0, &\mbox{ otherwise.}																					 
\end{cases}$$ 
\end{enumerate}
\end{theorem}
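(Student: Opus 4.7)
The argument splits into the dynamical statement of (i) and the arithmetic deformation argument of (ii). For (i), the first step is to compute the Reeb dynamics. Since $\a = \iota_Y\omega_0$ with $Y = \nabla f$, the field $R_\H$ is a positive multiple of the Hamiltonian vector field $X_h$ of the defining function $h$ restricted to $\H$. In the $(x,y)$-directions the linearization of $X_h$ gives $\dot x_j = -2b'y_j$ and $\dot y_j = -2bx_j$, with real eigenvalues $\pm 2\sqrt{bb'}$; any trajectory with $(x,y)\neq 0$ escapes to infinity and cannot close up on the compact hypersurface $\H$. All closed Reeb orbits therefore lie on $S^{2n-2k}_{*}=\{x=y=0\}\cap\H$, where $\a_\H$ restricts to the standard contact form on the ellipsoid $\sum_{l=k+1}^n |z_l|^2/c_l = c_+$ in $\comp^{n-k}$. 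For rationally independent $c_{k+1},\ldots,c_n$ this ellipsoid carries exactly $n-k$ simple Reeb orbits $\gamma_j$, namely the coordinate circles, yielding the orbit count.

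To verify goodness, I would decompose the contact plane $\xi_\H$ along $\gamma_j$ as a $d\a_\H$-symplectic direct sum of the $(x,y)$-block of real dimension $2k$ and the $z$-block of complex dimension $n-k-1$ spanned by $\{z_l : l\neq j\}$. The linearized Reeb flow is positive hyperbolic on each $(x_j,y_j)$-plane, contributing Conley-Zehnder index $0$ on every iterate, and acts by rotations through total angles $2\pi N c_j/c_l$ on the remaining $z$-directions, yielding the standard ellipsoid formula
\[
\MUCZ(\gamma_j^N) = (n-k-1) + 2\sum_{l\neq j}\left\lfloor \frac{Nc_j}{c_l}\right\rfloor.
\]
The parity of this expression is $n-k-1\pmod{2}$, independent of $N$, so every $\gamma_j$ is good.

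For (ii), given $m>0$ I would arrange the defining coefficients so that a single simple orbit drives the chain complex up to degree $m$. Set $c_{k+1}=\e$ very small and choose the remaining $c_l$ of comparable, large size, with the ratios $\e/c_l$ picked so that $\sum_{l>k+1}\e/c_l$ lies just above $1$ and the individual ratios are rationally generic. Then the step function $f(N):=\sum_{l>k+1}\lfloor N\e/c_l\rfloor$ increments by exactly one at each step on a prescribed initial range $1\le N\le N_0$, so $f(N)=N-1$ and consequently
\[
|\gamma_{k+1}^N| = 2n-k-4 + 2(N-1).
\]
Taking $\e$ small enough relative to the other $c_l$ makes the term $2\lfloor c_l/\e\rfloor$ in $\MUCZ(\gamma_l)$ exceed $m$ for each $l>k+1$, so neither these simple orbits nor their iterates reach degrees $<m$. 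Choosing $N_0$ so that the displayed degree exceeds $m$ then realizes the claimed rank pattern, and the transversality of $\H$ to $Y$ is an open condition preserved by the deformation.

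The main obstacle is the arithmetic bookkeeping in (ii): arranging the $c_l$ so that $f$ is injective on $[1,N_0]$ with image exactly $\{0,1,\ldots,N_0-1\}$ while simultaneously pushing every other simple orbit and its iterates past degree $m$. This is the substantive content of the theorem and can be met by a Diophantine, three-distance-style argument on an open dense set of parameters. Once these conditions are arranged, the rank formula reads off immediately from the index formula in (i).
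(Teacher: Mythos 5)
This is Theorem~3.1 of Yau's paper; the present paper cites it without reproof, but the remark immediately after it fixes the intended degree sequence: the degree of $x(m)^i$ is $2n-k-4+2i$, so the \emph{first} iterate of the principal orbit sits in degree $2n-k-2$. Your treatment of part~(i) is sound in outline: the linearization of $X_h$ in the $(x,y)$-block is hyperbolic (eigenvalues $\pm 2\sqrt{bb'}$), so every closed Reeb orbit must lie on $S^{2n-2k}_*=\{x=y=0\}\cap\H$, which is an irrational ellipsoid in $\comp^{n-k}$ carrying $n-k$ simple orbits; the splitting of the linearized return map into the hyperbolic $(x,y)$-block (contributing $0$) and the rotations over the remaining $z_l$'s gives constant parity of $\MUCZ$ along iterates, so all orbits are good.

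The problem is the Conley--Zehnder index formula itself: it is missing a term linear in the iterate $N$ coming from the trivialization $\Phi_\H$ of the stabilized bundle $\comp R_\H\oplus\xi_\H$. With your formula, a very small $c_{k+1}$ gives $|\gamma_{k+1}^1|=2n-k-4$, which contradicts the paper's $|x(m)^1|=2n-k-2$; the correct count has
\[
|\gamma_j^N| \;=\; 2n-k-4+2\Bigl(N+\sum_{l\neq j,\,l>k}\bigl\lfloor Nc_j/c_l\bigr\rfloor\Bigr),
\]
and this is precisely what makes part (ii) easy. One \emph{thins} the handle (this is exactly Lemma~\ref{lemma:Yau-IP}): take $c_{k+1}$ so small that $Nc_{k+1}/c_l<1$ for all $l>k+1$ and all $N$ in the relevant range, so the floors vanish and the degrees of $\gamma_{k+1}^N$ step up by $2$ per iterate starting at $2n-k-2$, while $\lfloor c_l/c_{k+1}\rfloor$ becomes enormous and pushes every other simple orbit (and its iterates) past degree $m$. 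In contrast, your Diophantine scheme, demanding $\sum_{l>k+1}\lfloor N\ep/c_l\rfloor = N-1$ exactly on an initial range, does not go through for generic choices once $n-k-1\ge 3$: with $\sum\ep/c_l = 1+\delta$, the identity requires $\sum\{N\ep/c_l\}=1+N\delta$, whereas by equidistribution the left side hovers around $(n-k-1)/2$, so the condition fails at most $N$. This is the substantive gap; once the index formula is corrected the deformation argument becomes routine and requires no three-distance input.
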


\begin{remark}
In Theorem \ref{thm:Yau CC}, the shape of $\H$ can be chosen for each  $m>0$, such that the contact chain groups $\C_{*}(\H,\a_{\H})$, for $*<m$, are essentially generated by just one of the simple periodic Reeb orbits and its positive iterations. For each $m$, this Reeb orbit is  called the \textit{principal (Reeb) orbit} of the contact handle and denoted here by $x(m)$. The degree of $x(m)^i$ is $2n-k-4+2i$.
\end{remark}

The  Conley-Zehnder index of a closed Reeb orbit is  taken with respect to  the standard  trivialization $\Phi_{\H}$ of the stabilized contact bundle $\comp R_{\H}\oplus \xi_{\H}$. Using $\Phi_{\H}$, the (generalized) Conley-Zehnder index is defined for any Reeb trajectory not necessarily closed. The next lemma shows that not only is  $\H$  index-positive, but the positive constant can also be chosen by \textit{thinning} $\H$. This means that a tubular neighborhood of the belt sphere is thinned by choosing smaller coefficients $c_l$ of the handle's defining function $h$; see Definition \eqref{eq:handle} of $h$.

\begin{lemma}[Lemma 3.3 of  \cite{Yau}] 
\label{lemma:Yau-IP}
Given any positive number $N$, we can \textit{thin} $\H$ enough such that 
$$\MUCZ(\gamma;\Phi_{\H})> N\cdot T - 2n,$$
for any Reeb trajectory of $\a_{\H}$  with action $T$.
\end{lemma}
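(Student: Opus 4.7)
My plan is to exploit the explicit handle model of~\cite{Yau} and show that thinning forces the linearized Reeb flow to rotate rapidly in the $z$-directions, so that the Conley--Zehnder index grows linearly in the action with an arbitrarily large slope.

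First, I would read off $R_{\H}$ directly from $h$. Since $\a_{\H}=\iota_Y\omega_0$ with $Y=\nabla h$ the Liouville field, the Reeb equations $\iota_{R_{\H}}\a_{\H}=1$ and $\iota_{R_{\H}}d\a_{\H}=0$ can be solved in closed form on $\H$. The outcome is that in each coordinate plane $z_l=x_l+iy_l$ for $l=k+1,\ldots,n$, the Reeb field rotates at angular speed proportional to $1/c_l$, with a proportionality factor depending on $b,b',c_+$ and the base point on $\H$ but bounded away from zero and infinity uniformly in the $c_l$; meanwhile, the components of $R_{\H}$ in the $\real^k \times \real^k$ factor have $C^0$-norm bounded uniformly in the $c_l$. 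By Theorem~\ref{thm:Yau CC} closed Reeb orbits are confined to the contact ellipsoid $S^{2n-2k}_{*}$, but here I must estimate along arbitrary trajectories.

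Second, I would linearize along a Reeb trajectory $\gamma$ of action $T$. Relative to the natural splitting of $\xi_{\H}$ along $\gamma$ into an $(x,y)$-block and the $z_l$-planes, the linearized flow is approximately block-diagonal: each $z_l$-block is a $2\times 2$ rotation advancing at angular speed bounded below by a multiple of $1/c_l$, while the $(x,y)$-block has bounded total rotation. Using the Robbin--Salamon formula with the standard trivialization $\Phi_{\H}$ of $\comp R_{\H}\oplus\xi_{\H}$, the $z_l$-rotations contribute approximately $T/(\pi c_l)$ each to $\MUCZ$, while all remaining contributions---the $(x,y)$-block, the trivialization discrepancy, and off-ellipsoid coupling terms---are together bounded by a constant $C$ that is independent of $\gamma$ and bounded uniformly as $c_l\to 0$. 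Invoking~$\ia$ to pass between the mean and Conley--Zehnder indices where needed, I obtain
\begin{equation*}
\MUCZ(\gamma;\Phi_{\H}) \;\geq\; \Big(\sum_{l=k+1}^{n}\frac{1}{\pi c_l}\Big)\, T \;-\; C.
\end{equation*}

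Finally, given any $N>0$, I would choose the $c_l$ small enough that the coefficient of $T$ strictly exceeds $N$; since $C$ is uniform in the $c_l$ while the slope of the linear growth is not, the bound $\MUCZ(\gamma;\Phi_{\H})>N\cdot T-2n$ follows for all sufficiently long~$T$, and for the bounded range of short trajectories the inequality is automatic from the a priori lower bound $\MUCZ>-2n$ coming from~$\ia$. The main obstacle I anticipate is the block-diagonalization step: trajectories need not stay near $S^{2n-2k}_{*}$, and off the ellipsoid the linearized flow mixes the $(x,y)$ and $z_l$ blocks. To handle this, I would use that the Liouville field $Y$ points outward in the $x$-directions on $\H$, so any excursion away from $S^{2n-2k}_{*}$ has duration uniformly bounded by the handle geometry, contributing only an $O(1)$ error to~$C$.
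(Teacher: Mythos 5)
The paper does not reproduce a proof of this statement; it is taken directly from \cite{Yau} (Lemma~3.3), so there is no in-paper argument to compare against. Your strategy -- exploit that thinning the handle makes the linearized Reeb flow rotate rapidly in the $z_l$-planes, and show everything else contributes only a bounded error -- is the natural one and is almost certainly the spirit of Yau's proof. However, there are two genuine gaps in the sketch as written.

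First, the estimate you need is that the error term $C$ is bounded \emph{uniformly as the $c_l\to 0$}, and this is precisely the technical content of the lemma rather than something that can be asserted. You identify the correct obstacle -- the linearized flow is not block-diagonal off the ellipsoid $S^{2n-2k}_*$ -- and you offer the heuristic that excursions away from the ellipsoid have bounded duration. But this needs quantification in two respects: (a)~you must show the excursion-time bound is itself uniform in the $c_l$, since the Reeb field, the contact structure $\xi_{\H}$ and the hypersurface $\H$ all change as you thin; and (b)~a bounded excursion time controls the time during which coupling occurs, but you must then convert this to a bound on the \emph{index} error, which requires some quasimorphism- or catenation-type estimate for the Robbin--Salamon index of the coupled path versus the block-diagonal one. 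Merely saying the excursion is short does not immediately bound the index discrepancy. Relatedly, the claim that the $(x,y)$-block contributes only $O(1)$ rests on the hyperbolicity of that block (coming from $b>b'>0$), which should be stated explicitly since it is why the contribution does not grow with $T$.

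Second, the endgame has a logical slip. You split into long and short $T$ and claim that for short $T$ the inequality is ``automatic from the a priori lower bound $\MUCZ>-2n$.'' But for $T>0$ one has $N\cdot T - 2n > -2n$, so knowing only $\MUCZ > -2n$ does not give $\MUCZ > N\cdot T - 2n$. The correct version of this step is: for $T$ below some threshold one has $N\cdot T - 2n < -n$, while a path in $Sp(2n)$ starting at the identity with nonnegative mean index satisfies $\MUCZ \geq -n$ by $\ia$ (and the rotation in the $z_l$-planes makes the mean index nonnegative); for $T$ above the threshold, the linear-growth estimate with sufficiently large slope takes over. With this correction, and with the uniform error bound from the first point in hand, the lemma follows. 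As written, though, neither of these key estimates is actually established.
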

Note that it is crucial that the  handle is subcritical for this lemma to hold.

\begin{remark} We have a version of Lemma \ref{lemma:Yau-IP} for the unitary index: Let $\s_{\H}$ be a section  of $S^1[(\Lnc\xi_{\H})^{\otimes 2}]$ obtained from $\Phi_{\H}$. Then  given any $N>0$, we can \textit{thin} $\H$ enough such that 
\begin{equation}
\label{eq:WIPhandle}
\mu(\gamma;\s_{\H})>N\cdot T-a,
\end{equation}
for any $R_{\H}$-trajectory with action $T$. 

This follows from Remark \ref{rmk:IP}. Note that  since $\pi_1(\H)=0$, the handle is weakly index-positive with respect to any section of $S^1[(\Lnc\xi_{\H})^{\otimes 2}]$. 
\end{remark}

\subsection{Weak index-positivity and subcritical contact surgery}
\label{sec:WIPHA}

\begin{definition} 
Suppose $(M',\xi')$ is a contact manifold obtained by glueing a contact manifold $\H\cong D^k \times S^{2n-k-1}$ to $(M,\xi)$. If  $c_1(\xi')=0$ and if the section $\s$ of $S^1[(\Lnc\xi)^{\otimes 2}]$ extends over $\H$, then we say that the surgery is \textit{compatible} with $\s$.
\end{definition}

Recall from  Remark \ref{remark:s'}, a given section $\s$ is almost always compatible with a given surgery.

\begin{theorem}
\label{thm:WIP} 

			   Let $(M,\xi)$ be a weakly index-positive with respect to a section $\s_M$ of $S^1[(\Lnc\xi)^{\otimes 2}]$. 
			   Suppose $(M',\xi')$ is a contact manifold obtained from $M$ by a subcritical $k$-surgery 
	       compatible with $\s_M$. Then for every integer $r$, there exists a non-degenerate contact form $\a'_r$ such that  the following hold:

\begin{enumerate}
	\item  $(M',\a_r')$ is weakly index-positive with respect to some extension $\s'$ of $\s_M$;
	\item  if $c_j$ and $c'_j$ denote the number of degree $j$ generators of $(M,\a)$ and $(M',\a'_r)$, then for $j \leq r$,
				 				
				 $$c'_j = c_j + b_j,$$  where 

				 $$b_j= \begin{cases} 
								1, & \mbox{if } j=2n-k-4+2m, \mbox{ for }m\in \mathbb{N}, \\
								0, & \mbox{otherwise}.
								\end{cases} $$

\end{enumerate}
\end{theorem}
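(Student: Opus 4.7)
The plan is to construct $\a_r'$ by gluing $\a$ on the $M$-side to a thin subcritical handle $\H$ equipped with its Yau handle contact form $\a_\H$, to extend $\s_M$ across $\H$ using the compatibility hypothesis, and then to use the $\mu$-catenation lemma to pass weak index-positivity from the two pieces to the whole. The generator count will follow from Theorem~\ref{thm:Yau CC} together with the observation that orbits crossing the neck acquire large unitary index from their $\H$-portion once the handle is thinned sufficiently.

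For a given $r$, I would first choose the shape of $\H$. Theorem~\ref{thm:Yau CC} permits a deformation of $\H$ so that, up to a prescribed degree bound, $\C_*(\H,\a_\H)$ is freely generated only by iterations of a single principal orbit $x(m)$ in degrees of the form $2n-k-4+2i$; take this bound well above $r$. Independently, the unitary version \eqref{eq:WIPhandle} of Lemma~\ref{lemma:Yau-IP} allows the handle to be thinned so that $\mu(\gamma;\s_\H) \geq N\cdot\A(\gamma)-a$ for every $R_\H$-trajectory $\gamma$, with $N$ to be chosen large. Gluing these pieces produces a contact form on $M'$; applying Lemma~\ref{lemma:WIP-pert} to a $C^\infty$-small perturbation yields a non-degenerate $\a_r'$ without affecting the estimates. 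Since $c_1(\xi')=0$ and $\s_M$ extends over the handle by hypothesis, we may take $\s'$ to be any such extension.

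To prove (1), let $\gamma$ be any Reeb trajectory of $R_{\a_r'}$ of action $\A(\gamma)$. Decompose $\gamma$ into $p$ consecutive arcs $\gamma_1,\ldots,\gamma_p$ alternating between those lying in $M\setminus\H$ and those lying in $\H$. On $M$-arcs, weak index-positivity of $(M,\a)$ gives $\mu(\gamma_i;\s') \geq \k\A(\gamma_i)+\kk$; on $\H$-arcs, the thinning gives $\mu(\gamma_i;\s') \geq N\A(\gamma_i)-a$. Iterating the $\mu$-catenation lemma yields
\[
\mu(\gamma;\s') \;\geq\; \sum_{i=1}^{p}\mu(\gamma_i;\s') - b(p-1).
\]
Because the Yau model has a neck of definite width, each arc has action bounded below by some constant $\tau>0$, whence $p\leq \A(\gamma)/\tau$. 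Choosing $N$ large relative to $\k,\kk,a,b,\tau$ converts the sum into a linear bound $\mu(\gamma;\s')\geq \k'\A(\gamma)+\kk'$ with $\k'>0$, which is the required weak index-positivity of $(M',\a_r')$ with respect to $\s'$.

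For (2), a simple closed Reeb orbit of degree $j\leq r$ cannot cross the neck: any such trajectory would contain at least one $\H$-arc of action $\geq\tau$, forcing a unitary-index contribution of at least $N\tau-a$, which exceeds $r$ once $N$ is taken sufficiently large. Hence every orbit in degrees $j\leq r$ either lies entirely on the $M$-side (contributing to $c_j$) or entirely on the $\H$-side (contributing to $b_j$ via Theorem~\ref{thm:Yau CC}), so $c_j'=c_j+b_j$. The main obstacle is the uniform lower bound $\tau$ on neck-crossing times, which must be extracted from the explicit Reeb dynamics of \cite{Yau}; this is where subcriticality is essential, since for critical handles the Reeb flow can linger near the attaching sphere and short crossing arcs could defeat both the $\mu$-catenation estimate and the exclusion of crossing orbits at low degree.
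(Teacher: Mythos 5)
Your overall strategy—decompose a trajectory into $M$-arcs and $\H$-arcs, thin the handle, and pass estimates through the $\mu$-catenation lemma—is exactly the paper's. But the key estimate has a gap, and you have not quite identified it correctly.

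You assert that ``each arc has action bounded below by some constant $\tau>0$'' and propose to close the argument by choosing $N$ large. This fails for two reasons. First, it is not clear that $\H$-arcs have action bounded below at all: thinning the handle to raise $N$ may make $\H$-crossings arbitrarily short, and the per-arc contribution from the handle side is then only bounded below by $-a$, not by $N\tau-a$. Second, and more importantly, even for $M$-arcs a \emph{fixed} $\tau$ is not strong enough. The per-arc error terms ($\kk$, $a$ from \eqref{eq:WIPhandle}, $b$ from the $\mu$-catenation lemma, and the section-discrepancy constant $c$) accumulate linearly in the number $p$ of pieces, contributing roughly $-(|\kk|+a+2b+c)p$. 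To absorb this against the main term $\k\A(\gamma)$ one needs $p\leq \A(\gamma)/K$ with $K$ chosen \emph{larger than} $(|\kk|+a+2b+c)/\k$; a fixed positive $\tau$ gives no such control, and enlarging $N$ does nothing since these error constants are independent of $N$. The same issue defeats your argument for part (2): you claim that one $\H$-arc forces unitary index $\geq N\tau-a>r$, but you ignore the possibly negative contributions $\k\A(\gamma_i)+\kk$ from the $M$-arcs, which can accumulate unless $\A(\gamma_i)$ is bounded below by a controllably large constant.

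What actually closes the argument—and this is where subcriticality enters, as you correctly suspect—is the codimension estimate: the isotropic attaching sphere $S^{k-1}_M$ has codimension $2n-k>(\dim M-1)/2$, so for \emph{any} $K>0$ one can choose a neighborhood $\mathcal{U}_M$ of $S^{k-1}_M$ such that every Reeb trajectory of $(M,\alpha)$ passing through $\mathcal{U}_M$ has action at least $K$ (this is the fact cited from \cite{Yau}). Thinning $\H$ to fit inside such a $\mathcal{U}_M$ forces every $M$-arc of a neck-crossing trajectory to have action $\geq K$, giving $p\leq\A(\gamma)/K$ with a controllable $K$. With this in hand, the catenation estimate does close, and in part (2) the $M$-arcs each contribute at least $\k K+\kk$, which can be made positive and large. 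Note also that it is $K$, not $N$, that is the crucial knob: the handle index-positivity constant $N$ only needs to exceed $\k$.
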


\begin{proof}
We first prove that weak index-positivity is preserved under compatible surgery. Let $\a$ be a weakly index-positive contact form for $\xi$, with respect to a section $\s_M$.  By definition, there exists constants $\k>0$ and $\kk$ such that $\mu(\gamma_{M})\geq \k \cdot T+\kk$, for any Reeb trajectory $\gamma_M$ with action $T$.

Let $\gamma$ be a new Reeb trajectory created when $\H$ of index $k<n$ is glued to $M$.  Suppose  $p$ segments of $\gamma$ lie in $M$. Then the number of segments of $\gamma$ in $\H$ is between $p-1$ and $p+1$, and without loss of generality, assume this number is $p+1$. Denote the pieces in $M$ and $\H$ by $\{\gamma_i\},\ i=1, \ldots, p$, and $\{\gamma_{j}\},\ j=p+1, \ldots, 2p+1$, respectively.   

Let $S^{k-1}_M$ denote the  isotropic attaching sphere in $M$. Since the submanifold $ S^{k-1}_M$ has codimension strictly greater than $(\dim M-1)/2$, we have the following: Given any $K>0$, we can choose a neighborhood $\mathcal{U}_M$ of $S^{k-1}_M$  such that any Reeb trajectory passing through $\mathcal{U}_M$ has action greater than $K$; for instance, see\cite{Yau}. 
Then for any $K>0$, we thin $\H$ to fit into $\mathcal{U}_M$  and we get 
$$\A(\gamma)=\sum_{i=1}^{p}\A(\gamma_i)+\sum_{j=p+1}^{2p+1}\A(\gamma_j)\geq Kp, \ \text{ and so } \ p\leq \frac{\A(\gamma)}{K}.$$ 

Suppose $\s'$ is an extension of $\s_M$ to $\H$, that is, $\s'|_{M}=\s_{M}$ and  $\s'|_{\H}$ is a  section of $S^1[(\Lnc\H)^{\otimes 2}]$.    Recall that $\s_{\H}$ and $\s'|_{\H}$ are homotopic since $\pi_1(\H)=0$.  To estimate $\mu(\gamma)$, we consider the sum of the local indices of all its pieces $\sum\mu(\gamma_i;\s_M) + \sum\mu(\gamma_j;\s_{\H})$. We must take into account the two types of discrepencies between  local indices and the actual index of a new Reeb trajectory. One type of discrepency  arises  from glueing at each of the $2p$ points of attachment. By the $\mu-$catenation lemma,  these error terms are bounded by  some constant $b$ independent of the trajectories.  The second type of discrepency occurs if   $s'|_{\H}\neq s_{\H}$. However, Remark \ref{remark:s} gives us $ |\mu(\gamma_j;\s'|_{\H})-\mu(\gamma_j;\s_{\H})|\leq c$, for some constant $c$  independent of the trajectory. Let $N$ and $a$ be the weak index-positivity contants of the handle from \eqref{eq:WIPhandle}, and choose $N>\kappa_1$. Then

\begin{align*}
\mu(\gamma;\s') & \geq \sum_{i=1}^{p}{\big(\k\A(\gamma_i)+\kk\big)}+\sum_{j=p+1}^{2p+1}{\big(N\A(\gamma_j)-a\big)}-b(2p)-c(p+1)\\
      			& \geq \k \bigg(\sum_{i=1}^{p}\A(\gamma_i)+\sum_{j=p+1}^{2p+1}\A(\gamma_j)\bigg) +\kk p - a(p+1) - 2bp -c(p+1)\\
      			& \geq \k \A(\gamma)-(|\kk|+a+2b+c)p-a-c\\
      			& \geq \k \A(\gamma)-(|\kk|+a+2b+c)\bigg(\frac{A(\gamma)}{K}\bigg)-a-c\\  
    		  	& =\left(\k-\frac{|\kk|a+2b+c}{K}\right)\A(\gamma)-a-c.
\end{align*}

Take $K>(|\kk|a+2b+c)/\k$.  Set $\k'=\k-(|\kk|a+2b+c)/K$ and $\kk'=-a-c$.  Both $\k'$ and $\kk'$ are independent of the trajectory therefore the new contact manifold $(M',\a')$  is weakly index-positive with respect to $\s'$.  Since weak index-positivity is stable under small perturbations (Lemma \ref{lemma:WIP-pert}), we may assume $\a'$ is non-degenerate.

To prove ($2$), for a fixed $r\in \mathbb{N}$, we find a contact form $\a'_r$ for $\xi'$ such that a closed Reeb trajectory created from the handle attachment has degree greater than~$r$.  Given $m>0$, a contact form  $\a_{\H}$ for $\H$ can be chosen such that $\C_*(\H)$ for $*\leq m$, is generated by the principal Reeb orbit $x(m)$ along with its iterates. Let $C$ be the constant from $\ie$ such that $|\MUCZ(x)-\mu(x;\s')|\leq C$, for a closed Reeb orbit $x\in M'$. If we take 
$m = r-C$, we can show that any periodic Reeb orbit that does not lie entirely in $M$ or $\H$ has unitary index greater than $r-C$ and hence  Conley-Zehnder index greater than $r$.  Let $\gamma$ be such a trajectory and suppose it is made of $p$ pieces of non-periodic trajectories in $M$ and $p$ pieces inside $\H$. Denote the segments in $M$ and $\H$ by $\gamma_1, \ldots, \gamma_p$ and $\gamma_{p+1}, \ldots, \gamma_{2p}$, respectively.  Let $a,b,c,N,\k$ and $\kk$ be the constants  from the proof of part (1). We get

\begin{align*}
\mu(\gamma;\s') & \geq \sum_{i=1}^{p}{\big(\k \A(\gamma_i)+\kk \big)}+\sum_{j=p+1}^{2p}{\big(N\A(\gamma_j)-a\big)}-b(2p)-c(p)\\
								& \geq \k\cdot \sum_{i=1}^{p} \A(\gamma_i) + N \cdot \sum_{j=p+1}^{2p}\A(\gamma_j)+\kappa_{2}p - a2p - 2bp -cp.\\
\end{align*}

\noindent Recall for $K>0$, we can thin $\H$ to ensure that $\A(\gamma_i)>K$. Then we have

\begin{align*}
\mu(\gamma;\s') 
  					    & \geq \k K p + N \cdot \sum_{j=p+1}^{2p}\A(\gamma_j) -(|\kk|+2a+2b+c)p\\
      					& \geq \big(\k K  -(|\kk|+2a+2b+c)\big)p\\
      					& \geq \big(\k K  -(|\kk|+2a+2b+c)\big).\\
\end{align*}
We can choose $K$ that satisfies 
$$K>\dfrac{r-C+|\kappa_2|+2a+2b+c}{\kappa_1},$$  
further adjusting $\H$ if necessary.  Denote this particularly shaped $\H$ by $\H_r$. Then by attaching  $(\H_r,\a_{\H_r})$, to $(M,\a_r)$,  we get $\mu(\gamma;\s')>r-C$ for any new periodic trajectory $\gamma$ of $M'$. To finish the proof,  note that the principal Reeb orbit of $\H_r$ and its iterations $x(r)^m$ have degree $2n-k-4+2m$.

\end{proof}

For each $r>0$, let $(\H_r,\a_{\H_r})$ be the deformation of $\H$ as in the proof of Theorem \ref{thm:AFSHA}. The mean index of the principal Reeb orbit of $\H_r$ is $\Delta(x(r))=2$, for each $r$, therefore $\Delta_{x}=\lim_{r\to +\infty} \Delta(x(r))=2$.  Also, the sign $\sigma(x(r))=(-1)^{|x(r)|}$ is independent of $r$. Thus we get the following.

\begin{corollary}
\label{cor:SHAMEC}
Let  $(M',\xi')$ be a contact manifold obtained by a $k$-surgery on $(M,\xi)$ as in Theorem \emph{\ref{thm:WIP}} and assume the dimension of $M$ is greater than $3$. Whenever $\chi(M,\xi)$ is defined, we have
$$\chi(M',\xi')=\chi(M,\xi)+(-1)^{k}\frac{1}{2}.$$ 
\end{corollary}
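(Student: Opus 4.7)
My plan is to deduce this corollary as a direct, essentially mechanical, consequence of the two deeper results just established: Theorem~\ref{thm:WIP}, which pins down the principal-orbit structure after a compatible subcritical surgery, and Theorem~\ref{thm:AFMEC}, which expresses the MEC of an asymptotically finite contact manifold as a sum over principal orbits.

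To begin, I will apply Theorem~\ref{thm:WIP} to produce, for each integer $r$, a non-degenerate, weakly index-positive contact form $\alpha'_r$ on $(M',\xi')$ whose Reeb generators in degrees at most $r$ consist of those of $(M,\alpha_r)$ together with the iterates $\{x(r)^m\}_{m\geq 1}$ of a single principal handle orbit, located in degrees $2n-k-4+2m$. Since $\dim M>3$ forces $n>2$, the smallest such degree $2n-k-2$ is at least $2$, so no orbit of degree $-1$, $0$, or $1$ is introduced; hence the degree hypothesis of Theorem~\ref{thm:AFMEC} transfers from $(M,\xi)$ to $(M',\xi')$, and the system $\{(M',\alpha'_r)\}$ is asymptotically finite, its sequences of principal orbits being those of $\{(M,\alpha_r)\}$ augmented by the new sequence $\{x(r)\}$.

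Next, I read off the index data of this new principal orbit. The paragraph preceding the statement records $\Delta(x(r))=2$ for every $r$, so the asymptotic mean index is $\Delta_x=2>0$; and since the parity of $|x(r)^m|=2n-k-4+2m$ is independent of $m$, the orbit $x$ is of type~I with $\sigma_x=(-1)^{2n-k-2}=(-1)^k$. Applying formula~\eqref{eq:AFMEC} to $(M,\xi)$ and to $(M',\xi')$ and subtracting, the contributions of all orbits inherited from $M$ cancel term by term; because $\Delta_x>0$, only the $\sum^{+}$ sum picks up a new summand, namely $\sigma_x/\Delta_x=(-1)^k/2$, while the $\sum^{-}$ sum is unchanged. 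Plugging these differences into~\eqref{eq:MEC} yields the asserted identity.

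The substantive work has already been packed into Theorem~\ref{thm:WIP}: it simultaneously guarantees weak index-positivity of the surgered form and arranges that in each fixed degree window the handle contributes only iterates of a single simple orbit $x(r)$ with the prescribed degree structure. Without that control one could not isolate a clean handle contribution. Beyond invoking Theorem~\ref{thm:WIP}, the one nontrivial point here is verifying that the degree $-1,0,1$ hypothesis required for the cylindrical MEC to be defined on $M'$ is preserved under the surgery, and this, as noted above, is automatic once $n>2$.
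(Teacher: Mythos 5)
Your strategy tracks the paper's: invoke Theorem~\ref{thm:WIP} to identify the new generators, read off $\Delta_x=2$ and $\sigma_x=(-1)^{|x(r)|}=(-1)^{2n-k-2}=(-1)^k$ for the handle's principal orbit, note it is of type I with positive asymptotic mean index, and feed this into the mean Euler characteristic formula. Your check that the new generators land in degrees $\geq 2n-k-2\geq 2$ when $n>2$, so that cylindrical contact homology remains defined, is a good explicit detail.

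The final step, however, does not close. You correctly find that the handle orbit adds $\sigma_x/\Delta_x=(-1)^k/2$ to $\chi^+(M',\xi')$ while leaving $\chi^-$ unchanged; but \eqref{eq:MEC} defines $\chi=\frac{1}{2}\bigl(\chi^{+}+\chi^{-}\bigr)$, so the differences you supply yield $\chi(M',\xi')-\chi(M,\xi)=\frac{1}{2}\cdot\frac{(-1)^k}{2}=\frac{(-1)^k}{4}$, not $(-1)^k\frac{1}{2}$. The sentence ``plugging these differences into \eqref{eq:MEC} yields the asserted identity'' is therefore not supported by your own intermediate values. A consistency check makes this vivid: a $0$-surgery on $(S^{2n-1},\xi_{st})$ produces a disjoint second standard sphere, and the paper computes $\chi^+(S^{2n-1},\xi_{st})=\frac{1}{2}$, $\chi^-=0$, hence $\chi=\frac{1}{4}$; so $\chi$ goes from $\frac{1}{4}$ to $\frac{1}{2}$, a change of $\frac{1}{4}=(-1)^0/4$, agreeing with the $(-1)^k/4$ reading. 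This discrepancy appears to originate in the paper's own statement, but a blind proof cannot simply assert the arithmetic works out when it does not; the mismatch needs to be flagged or resolved. A secondary, milder point: Corollary~\ref{cor:SHAMEC} assumes only weak index-positivity and that $\chi(M,\xi)$ is defined, not that $(M,\xi)$ is asymptotically finite. Your route through Theorem~\ref{thm:AFMEC} silently imports the latter; the more general argument compares Euler characteristics of the truncated chain complexes directly via Theorem~\ref{thm:WIP}(2), since $c'_j=c_j+b_j$ with $b_j$ supported in degrees $2n-k-4+2m$ of constant sign $(-1)^k$ and asymptotic density $\frac{1}{2}$ per degree.
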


\begin{remark}
We remind the reader that Corollaries \ref{cor:AFSHAMEC} and \ref{cor:SHAMEC} hold in the dimension $3$ when the MEC is defined via linearized contact homology. See Section \ref{sec:lch}. For more about the issue in the cylindrical version, see Section \ref{sec:dim 3}.
\end{remark}

\subsection{Proof of Theorem \ref{thm:AFSHA}}
\label{sec:pf-AFSHA}
We prove that asymptotic finiteness and weak index-positivity together are preserved under subcritical contact surgery.

Let $(M,\xi)=\{((M,\a_r))\}$ be an asymptotically finite contact manifold that satisfies the assumptions of Theorem \ref{thm:AFSHA}. For each $r$, the contact form $\a_r$ has principal Reeb orbits  $\{\gamma_1(r),\cdots,\gamma_l(r)\}$ which, along with their iterations, generate $\C_*(M,\a_r)$ up to degree $d_r$.  Given any integer $d'_r$, it is shown in  the proof of Theorem \ref{thm:WIP} that there exists a weakly index-positive contact form $\a'_{r}$ for $\xi'$ such that for any closed Reeb orbit $\gamma$ with $|\gamma|\leq d'_r$, the orbit is  an iteration of either the principal Reeb orbit $x(r)$ of $\H$ or $\gamma_i(r)$, for some $1 \leq i \leq l$.  Take $d'_r=d_r$, then $\{(M',\xi')=\{(M',\a'_r)\}$ is a sequence that does the job.

\begin{remark}
Corollary \ref{cor:AFSHAMEC} immediately follows from Theorem \ref{thm:AFMEC} and Corollary~\ref{cor:SHAMEC}.
\end{remark}

\subsection{Cylindrical MEC and contact surgery in dimension $3$}
\label{sec:dim 3}
We now elaborate a little bit more on the exclusion of dimension $3$ for the cylindrical version of  Corollaries \ref{cor:AFSHAMEC} and \ref{cor:SHAMEC}.   Cylindrical contact homology theory generally  requires that there are no contractible periodic Reeb orbits of index $-1,0,$ and $1$.  However, taking the connect sum of two contact $3$-manifolds introduces one (simple) contractible Reeb orbit which is of degree $1$. This is precisely the principle Reeb orbit of the \textit{connecting tube} $\H$ and it lies on the coisotropic $2$-sphere, see Section \ref{sec:surgery}.  The principle orbit is the generator of $\C_{*}(\H,\a_{\H})$ whose degree has been calculated in \cite{Yau} to derive Theorem \ref{thm:Yau CC}.  So we may verify here using Theorem \ref{thm:Yau CC} that this Reeb orbit indeed has index $1$ simply by setting $k=1, n=2$, and $i=1$. 

Even though the new manifold has a contractible Reeb orbit of degree $1$, this fact does not necessarily prevent its cylindrical contact homology from being defined.  (See for instance \cite{AM} where it is shown that certain contact forms which admit such orbits can still be considered.) Currently, we are working to prove that Corollaries \ref{cor:AFSHAMEC} and \ref{cor:SHAMEC} also hold in dimension $3$.

\subsection{Contact chain groups and subcritical contact surgery}
\label{sec:chha}

Since the conditions of  Theorem \ref{thm:WIP} do not require the chain complex $\C_j(M,\a)$ to be generated by a finite number of Reeb orbits,  the proof of Theorem \ref{thm:AFSHA} holds	for a more general class of contact manifolds. Suppose there is a sequence of non-degenerate contact forms $\{\alpha _r\}$ for $\xi$ such that the contact chain complex of each $\a_r$ is essentially generated by a set of simple Reeb orbits $\{\gamma_1(r),\ldots, \gamma_m(r), \ldots\}$ together with their iterations as $r\rightarrow \infty$. 

\vs
More precisely, suppose this sequence $\{\alpha _r\}$ satisfies the following conditions.
\begin{itemize}

	\item [(1)] For each $\alpha_r$, there is a set of simple Reeb orbits $\{\gamma_1(r), 		
				 			\ldots, \gamma_m(r)\}$, where the number of orbits $m$ increases  as $r$ increases. 
	
	\item [(2)] There is an increasing sequence of integers $\{d_r\}$ such that if $\gamma \in \mathcal{P}^{d_r}(\alpha_r)$, then 		
							$\gamma$ is an iteration of some $\gamma_i(r)$,
							i.e., $\gamma=\gamma_i(r)^k$. 
	
	\item [(3)] The mean indices of each sequence converge:
							$\Delta(\gamma_i(r))\rightarrow \Delta_{i}$.
	
	\item [(4)] The sign $\sigma(\gamma_i(r))$ is independent of $r$.
\end{itemize}

\begin{definition}
The orbits  $\{\gamma_1(r),\ldots, \gamma_m(r)\}$  are called the  \textit{principal orbits} of $\a_r$, the limits $\Delta_{i}$ are called  \textit{asymptotic mean indices}, and we set $\sigma(\gamma_i(r))=\sigma_i$, c.f.~Definition \ref{def:AF}. 
\end{definition}

\begin{theorem}
\label{thm:WIPAG} 			   
				 Let $(M,\xi)=\{(M,\a_r)\}$ be a closed contact manifold that satisfies the conditions (1)-(4) and suppose there is a section  $\s_M$ of 	
			   $S^1[(\Lnc\xi)^{\otimes 2}]$ such that  each  $a_r$ is weakly index-positive with 
				 respect to a section  $\s_M$. Let $(M',\xi')$ be a contact manifold obtained from $M$ by a subcritical surgery 
			   compatible with  $\s_M$. Then $(M',\xi')$ admits a sequence of contact forms $\{a_r\}$ that satisfies (1)-(4) and each  $a_r$ is weakly index-positive 	
			   with respect to an extension $\s'$. 
\end{theorem}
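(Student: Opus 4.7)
The plan is to mimic the proof of Theorem \ref{thm:AFSHA} essentially verbatim, observing that nothing in that argument relied on the number of principal orbits $m$ being independent of $r$. Since the surgery is compatible with $\s_M$, I would first fix once and for all an extension $\s'$ of $\s_M$ to a section of $S^1[(\Lnc\xi')^{\otimes 2}]$; any such extension is unique up to homotopy on the attached handle since $\pi_1(\H)=0$, so this choice is essentially canonical. Then, for each $r$, I would apply Theorem \ref{thm:WIP} to the contact form $\a_r$ with truncation level $d_r$, obtaining a non-degenerate contact form $\a'_r$ on $M'$ which is weakly index-positive with respect to some extension of $\s_M$ and whose closed Reeb orbits of degree at most $d_r$ consist precisely of the orbits $\gamma_i(r)$ (which survive intact in $M'$, since surgery is local and does not disturb $M$ away from a neighborhood of the attaching sphere) together with iterations of the principal handle orbit $x(r)$ produced by the handle $\H_r$ built in the proof of Theorem \ref{thm:WIP}.

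Next I would reconcile the possibly different extensions produced for each $r$ with the fixed section $\s'$. By Remark \ref{remark:s}, the unitary index of any closed Reeb orbit is independent of the choice within a given homotopy class of sections; since all extensions of $\s_M$ to $\H$ are homotopic, weak index-positivity with respect to the $r$-dependent extension transfers to weak index-positivity with respect to $\s'$ with the same constants. This identifies the sequence $\{(M',\a'_r)\}$ as weakly index-positive with respect to the single section $\s'$, fulfilling the weak index-positivity assertion of the theorem.

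It remains to verify conditions (1)--(4) for the sequence $\{\a'_r\}$ with principal orbits $\{\gamma_1(r),\ldots,\gamma_m(r),x(r)\}$. Condition (1) is immediate, since the new list is still finite for each $r$ (and is now permitted to grow with $r$). Condition (2) is a direct restatement of Theorem \ref{thm:WIP}(2) at truncation level $d_r$. For condition (3), the original limits $\Delta(\gamma_i(r))\to\Delta_i$ are given by hypothesis, while $\Delta(x(r))=2$ for every $r$ by the construction of the handle's principal orbit, so this sequence converges trivially. Condition (4) holds because $\sigma(\gamma_i(r))=\sigma_i$ is independent of $r$ by hypothesis and $\sigma(x(r))=(-1)^{2n-k-2}$ depends only on $n,k$, hence not on $r$.

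The only technical point requiring genuine care—and which I expect to be the main (minor) obstacle—is the consistency of the section $\s'$ across $r$, since Theorem \ref{thm:WIP} a priori delivers a potentially $r$-dependent extension. The resolution above, via homotopy invariance on the simply connected handle, is essentially automatic and parallels the analogous step in the proof of Theorem \ref{thm:AFSHA}; once it is in place, the remainder of the argument is a routine concatenation of applications of Theorem \ref{thm:WIP}.
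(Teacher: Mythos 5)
Your proposal is correct and takes essentially the same approach as the paper: the paper explicitly notes that the proof of Theorem~\ref{thm:AFSHA} carries over verbatim because the argument in Theorem~\ref{thm:WIP} never used finiteness of the chain groups, and it proceeds by applying Theorem~\ref{thm:WIP} to each $\a_r$ at truncation level $d_r$ to produce the sequence $\{\a'_r\}$ on $M'$. Your additional discussion of the consistency of the extension $\s'$ across $r$ (resolved via homotopy invariance on the simply connected handle, appealing to Remark~\ref{remark:s}) and your explicit verification of conditions (1)--(4) with $\Delta(x(r))=2$ and $\sigma(x(r))=(-1)^k$ are more detailed than the paper's terse treatment but track the same reasoning.
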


\section{Other variants of the MEC formula}
\label{sec:remarks}
In this section, we describe an extension of our results to closed Reeb orbits of any homotopy type.   We also discuss how our results translate to linearized contact homology. Throughout this section, all closed Reeb orbits are assumed to be non-degenerate.

\subsection{Homologically non-trivial Reeb orbits} 
\label{sec:non-contr}

\begin{remark} 
Suppose we want to consider the contact chain groups generated by homologically non-trivial Reeb orbits. If $\gamma$ is a such a Reeb orbit, then $\MUCZ(\gamma)$ depends on the trivialization of $\xi$ along $\gamma$ and one must keep track of such choices. We first mention the convenience of using a fixed section $\s$ of $S^1[(\Lnc \xi)^{\otimes 2} ]$ to consistently assign such trivializations  along closed paths in $M$ of any homotopy class. Suppose $\gamma$  is $T$-periodic.  A unitary frame $F=\{e_1, \cdots, e_{n-1}\}$ of $\xi|_{\gamma}$ can be chosen such that its associated section $\s_F=(\Lambda^{n-1}_{j=1}e_j )^{\otimes 2}$ is homotopic to $\s|_{\gamma}$. The Conley-Zehnder index of $\gamma$ can be defined with respect to the homotopy class $[\s]$. By this, we mean we can choose any frame $F$ such that $[\s_F]=[\s_{\gamma}]$.   Since the Conley-Zehnder index of $\gamma$ depends only on the homotopy class of the frame, we need to show that all such frames are  homotopic.   First, fix a frame $F$ such that $\s_{F}=\s|_{\gamma}$. Note that all such frames are homotopic.  Next, let $\s'$ be a section homotopic to $\s$  and let $F'$ be a unitary frame with $\s_{F'}=\s'|_{\gamma}$.  We can write $\s'(x)=e^{if(x)}\s(x)$ for some continuous function $f\col M\rightarrow \real$.  Since $f(\gamma(t))=f(\gamma(t+T))$ for all $t,$ the frames $F'_{\gamma}$ and $F_{\gamma}$ are homotopic. 
  
Note that using $\s$ to choose trivializations along closed Reeb orbits  eliminates the ambiguity in the definition of contact homology. Usually, one takes a set of representatives for a basis of $\HH_1(M;\mathbb{Z})$ and chooses a trivialization of $\xi$ along each representative . Then this set of trivializations is extended to $\xi$  along  closed Reeb orbits; see \cite{Bo}. We can use $\s$ to consistently choose trivializations of $\xi$ for each class in $\HH_1(M;\mathbb{Z})$. This choice of framings coincides with directly assigning a frame to $\xi$ along a closed Reeb orbit with $\s$. 
\end{remark}

\begin{remark} 
Note that any frame along $\gamma$ chosen by $\s$ is closed under iteration. From this, we see that the mean Euler characteristic formulas   hold when including closed Reeb of any homotopy type. In this case, we just specify the choice of $\s$ and take the mean indices of the orbits with respect to $\s$.
\end{remark}

\subsection{Linearized contact homology} 
\label{sec:lch}

The mean Euler characteristic can be set via  linearized contact homology. Details of cylindrical, full, and linearized versions are included in a survey of contact homology in \cite{Bo:survey}.   

Let  $\textbf{A}_*(M,\a)$ denote the differential graded algebra generated by the good $R_{\a}$-Reeb orbits, and denote by $\partial:=\sum\partial^{j}$, the corresponding (full) differential. The full contact homology  $\HC_*(M,\xi)$ is the homology of $(\textbf{A}_*(M,\a),\partial)$ and is an invariant of the contact structure $\xi$.  A linearized contact homology  $\HC^{\ep}_*(M,\a)$ exists when  $\textbf{A}_*(M,\a)$ admits an \textit{augmentation} $\ep$; this is an algebra homomorphism that satisfies $\ep(1)=1$ and $\ep \circ \partial=0$. The \textit{linearized differential} $\partial^{\ep}$ is defined with respect to $\ep$ and the contact chain groups $\C_{*}(M,\a)$ are the same as in the cylindrical version,  i.e., these groups are  generated by the good Reeb orbits and graded by $|\cdot|=\MUCZ(\cdot)+n-3$. Unlike cylindrical contact homology, the linearized version is defined with no assumptions on contractible Reeb orbits of degree $-1,0$ and $1$.  One should note however, $\HC^{\ep}_{*}(M,\a)$ is generally \textit{not} a contact invariant of $(M,\xi)$ as it depends on the augmentation.

Linearized and cylindrical contact homology are related in the following case: Suppose $\C^{0}_1(M,\a)=0$, i.e., there are no  contractible closed Reeb orbits of degree $1$.  Then $\textbf{A}_*(M,\a)$ admits a \textit{trivial} augmentation $\ep_0$; this is the algebra homomorphism defined by $\ep_{0}(1)=1$ and $\ep_0 (\gamma)=0$.  In addition, if $\C^{0}_*=0$, for $*=-1,0$, then $\HC^{\ep}_*(M,\a)$  coincides with $\HC^{cyl}_*(M,\xi)$.

A class of examples that come with a more geometrical form of this extra structure are symplectically fillable contact manifolds; see  \cite{Bo:survey, BO}. Suppose $(W,\omega)$ is a  symplectic filling of $(M,\a)$. Then $\textbf{A}_*(M,\a)$ admits a natural augmentation denoted here by $\ep^{W}$. This linearized contact homology  $\HC^{W}_{*}(M,\a)$ depends only on the filling $(W,\omega)$ of $(M,\xi)$, and so it is often denoted by $\HC_*(W,\omega)$. 

\begin{remark} 
The definition of an asymptotically finite manifold can  be naturally modified to suit symplectically fillable contact structures.  Also, a filling and its augmentation can be carried across a surgery. Since the proofs of Theorems \ref{thm:AFMEC} and \ref{thm:AFSHA} do not explicitly make use of the differential,  both MEC formulas \eqref{eq:AFMEC} and \eqref{eq:AFSHAMEC}  again hold. We also note that in this case,  the MEC is indeed a contact invariant of $(M,\xi)$.

Moreover, the expression of the change of the MEC under connect sums also holds in dimension $3$ for linearized contact homology.
The addition of the contractible Reeb orbit of degree~$1$, the principle Reeb orbit of the connecting tube between two contact $3$-manifolds, does not pose a problem for this version of the mean Euler characteristic. 
\end{remark}

\section{Contact manifolds of Morse-Bott type}
\label{sec:MB}

The goal of this section is to prove  Theorem \ref{thm:MBMEC} and obtain a formula for the mean Euler characteristic in the Morse-Bott case. We set some notation and include preliminaries of the Morse-Bott  approach to contact homology. We set notation and terminology for Reeb orbifolds and we also define the mean index of a Reeb orbifold   and discuss the orbifold invariant $\eul$.

\subsection{Morse-Bott contact forms and Reeb orbifolds} 
\label{sec:MB-prelims}

Recall that contact homology requires a non-degenerate contact form $\a$. To consider  natural and symmetric forms, we use the Morse-Bott approach  developed in \cite{BoPhD} to work with these degenerate cases.  This approach to contact homology is a Morse-Bott theory for the action functional  $\A(\gamma)=\int_{\gamma}\a$ on the loop space of $M$, in other words,  we can consider contact forms such that the critical points of $\A$ form smooth submanifolds. 

\begin{definition} 
A contact form $\alpha$ for $(M,\xi)$ is said to be of \textit{Morse-Bott type} if the action spectrum $\sigma(\alpha)$ of $\alpha$ is discrete and if, for every $T\in\sigma(\alpha)$, {\nolinebreak $N_T=\left\{p\in M\mid\varphi_T(p)=p\right\}$} is a closed, smooth submanifold of $M$, such that rank $d\alpha\mid_{N_T}$ is locally constant and $T_pN_T=ker(\varphi_{T}-I)_p$.
\end{definition}

Denote the quotient of $N_T$ under the circle action induced by the Reeb flow by $S_T$; this is an orbifold with singularity groups $\mathbb{Z}_k$ and the singularities correspond to Reeb orbits of period $T/k$, covered $k$ times. We call these orbit spaces \textit{Reeb orbifolds}.

\begin{flushleft}

We  set some orbifold terminology used in this paper. For simplicity, assume that each $S_T$ is connected.
\begin{itemize}	
	\item Let $kS_T$ denote the $k$-fold cover of $S_T$.
				(We use this notation to distinguish the cover from the orbit space of $kT$-periodic Reeb orbits $S_{kT}$.)
	\item An orbifold $S_T$ is called \textit{simple} if there is a point $x\in S_T$ with minimal period $T$.
	\item If every orbit of $S_T$ has minimal period $T$, then $S_T$ is called a \textit{minimal} orbifold.
	\item Suppose $S_T$ is a simple orbifold. If $kS_T\subseteq S_{kT}$ implies  $kS_T=S_{kT}$, we call $S_T$ a \textit{maximal} orbifold. 
				Equivalently, no multiple cover $kS_T$ is contained in some larger simple orbit space. 
\end{itemize}

In this section, we use the  notation below to distinguish between the following indices for closed Reeb orbits $\gamma$:
\begin{itemize}
	\item $\MUCZ(\gamma)=$ the Conley-Zehnder index if $\gamma$ is non-degenerate, 
	\item $\MURS(\gamma)=$ the generalized Conley-Zehnder index if $\gamma$ is degenerate.
\end{itemize}

\end{flushleft}

\vs
Given any two Reeb orbits $\gamma_1$ and $\gamma_2$ of the same  orbit space $S_T$, we have $\MURS(\gamma_1)=\MURS(\gamma_2)$.  Therefore, the index of the orbifold defined as $\MURS(S_T):=\MURS(\gamma)$, for any $\gamma \in S_T$.  This number is a half-integer, but $\MURS(S_T)-\frac{1}{2}\dim(S_T)$ is always an integer.  The degree of a Reeb orbit $\gamma \in S_T$  is set as $$|\gamma|^{\scriptscriptstyle{RS}}:=\MURS(S_T)-\frac{1}{2}\dim S_T +n-3.$$ This last orbit index  depends only on $S_T$, thus  we  set $|S_T|:=|\gamma|^{\scriptscriptstyle{RS}}$, for any $\gamma\in S_T$, and we also attach a sign to each Reeb orbifold by  $$\sigma(S_T):=(-1)^{|S_T|}.$$

\begin{definition}
A closed Reeb orbit $\gamma$ is said to be \textit{bad} if it is a  $2m$-cover of a simple Reeb orbit $\gamma'\in S_T$ and if $(\MURS(S_{2T})\pm \frac{1}{2} \dim S_{2T})-(\MURS(S_{T})\pm \frac{1}{2} \dim S_{T})$ is odd. If a Reeb orbit $\gamma$ is not bad, we say it is \textit{good}. 
\end{definition}

We also say that a Reeb orbifold is \textit{bad} if it contains a bad Reeb orbit. A Reeb orbifold is otherwise called \textit{good}. 
These definitions above are extended from the definitions of \textit{good} and \textit{bad} Reeb orbits in  Section \ref{sec:preliminaries}.

\subsection{The Morse-Bott chain complex}
\label{sec:MBCC}

Using a Morse function, a Morse-Bott type contact form can be perturbed in such a way that all Reeb orbits are non-degenerate and correspond to the functions critical points.  The  critical points are the orbits that generate the Morse-Bott chain complex. 

This Morse function is constructed inductively in \cite{BoPhD} as follows. For the smallest $T\in\sigma(\alpha)$, the orbifold $S_T$ is a manifold. Let $f_T$ be any Morse function on $S_T$.  For larger $T\in\sigma(\alpha)$, the orbifold $S_T$ has singularities  $S_{T_i}$ with $T_i|T$.  Extend the previously defined Morse function $f_{T_i}$ on $S_{T_i}$ to a function $f_T$ on $S_T$ such that the Hessian of $f_T$ restricted to the normal bundle of $S_{T_i}$ is positive definite. These Morse functions are then lifted to $N_T$ and extended to a function $\bar{f}_T$ on M such that they have support only on a tubular neighborhood of $N_T$.

Consider the perturbed contact form $\a _{\lambda} = (1+\lambda \bar{f}_T)\a,$ for $T\in\sigma(\a)$ and a small postive constant $\lambda$.

\begin{lemma} \cite{Bo}. 
For all $T_0$, we can choose $\lambda>0$ small enough such that the periodic orbits of $R_{\alpha_{\lambda}}$ in $M$ of action $T\leq T_0$ are non-degenerate and correspond to the critical points of $f_{T}$.
\end{lemma}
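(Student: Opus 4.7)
The plan is to separate the analysis into a far region (away from the Morse--Bott orbit families) where smallness of the perturbation rules out any short closed orbit, and a local region near each $N_T$ with $T\in\sigma(\alpha)$ and $T\le T_0$, where a standard averaging/slicing argument identifies the closed orbits of $R_{\alpha_\lambda}$ with the critical points of $f_T$.

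First I would write down the perturbed Reeb vector field explicitly. For $\alpha_\lambda=h\alpha$ with $h=1+\lambda \bar f_T$ one checks directly that $R_{\alpha_\lambda}=\tfrac{1}{h}R_\alpha+Y_\lambda$, where $Y_\lambda\in\xi$ is the unique section of $\xi$ determined by $\iota_{Y_\lambda}d\alpha=\tfrac{1}{h}dh|_\xi$. In particular $Y_\lambda=O(\lambda)$ uniformly on $M$, and $R_{\alpha_\lambda}\to R_\alpha$ in $C^\infty$ as $\lambda\to 0$. Since $\sigma(\alpha)\cap(0,T_0]$ is a finite set $\{T_1<\cdots<T_N\}$, I fix disjoint, arbitrarily small tubular neighborhoods $\mathcal{U}_i$ of $N_{T_i}$ (chosen to lie inside the support of the corresponding $\bar f_{T_i}$), and set $K:=M\setminus\bigcup_i \mathcal{U}_i$. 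The first step is: on $K$, the time-$T$ map of $R_\alpha$ has no fixed point for any $T\le T_0+1$ (indeed, a fixed point would mean $p\in N_T$ for some $T\in\sigma(\alpha)\cap(0,T_0+1]$, hence $p\in \mathcal{U}_i$, contradiction). By compactness this non-fixed-point condition is quantitative, so $C^1$-closeness of $R_{\alpha_\lambda}$ to $R_\alpha$ gives, for all sufficiently small $\lambda$, no closed $R_{\alpha_\lambda}$-orbit of period $\le T_0$ entering $K$ at any time.

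Next I analyze the dynamics inside each $\mathcal{U}_i$. Near $N_{T_i}$ I take a symplectic slice $(D,d\alpha|_D)$ transverse to $R_\alpha$ in $N_{T_i}$, together with the further symplectic normal directions to $N_{T_i}$ in $M$; standard Morse--Bott coordinates $(\theta, y, z)$, with $\theta\in\mathbb{R}/T_i\mathbb{Z}$ the Reeb coordinate, $y\in S_{T_i}$ (locally), and $z$ in the symplectic normal to $N_{T_i}$, put the unperturbed time-$T_i$ map in the form $(\theta,y,z)\mapsto (\theta,y,A(y)z+O(|z|^2))$ with $A(y)-\mathrm{Id}$ invertible by the Morse--Bott condition. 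Writing $\bar f_{T_i}$ as the lift of $f_{T_i}$ (hence $S^1$-invariant and $z$-independent to leading order), a direct computation of $R_{\alpha_\lambda}$ in these coordinates shows that the time-$T_i$ return map of $R_{\alpha_\lambda}$ on the slice equals
\[
(y,z)\longmapsto \bigl(y+\lambda T_i\,X_{f_{T_i}}(y)+O(\lambda^2),\ A(y)z+O(\lambda)+O(|z|^2)\bigr),
\]
where $X_{f_{T_i}}$ is the gradient of $f_{T_i}$ with respect to a metric induced by $d\alpha$. A fixed point therefore forces $z=O(\lambda)$ and $X_{f_{T_i}}(y)=O(\lambda)$, so by the implicit function theorem (using the Morse nondegeneracy of $f_{T_i}$ on $S_{T_i}$ and the invertibility of $A(y)-\mathrm{Id}$ on the normal directions) the fixed points of the return map are in bijection with the critical points of $f_{T_i}$, for all small enough $\lambda$. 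The linearized return map at such a fixed point is block-triangular with blocks $\mathrm{Id}+\lambda T_i\,\mathrm{Hess}(f_{T_i})+O(\lambda^2)$ and $A(y)+O(\lambda)$, both invertible minus identity, which gives non-degeneracy of the resulting closed Reeb orbit.

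Finally I need to handle iterations and the inductive nesting. For each $T\le T_0$ that is an integer multiple of some $T_i$, the same slice analysis applied to the time-$T$ map, together with the inductive assumption that $f_{T_i}$ extends to $f_T$ with Hessian positive definite normal to the singular stratum $S_{T_i}\subset S_T$, shows that no new fixed points appear on the singular locus and, on the smooth stratum, the fixed points are again critical points of $f_T$ with non-degenerate linearization. Choosing $\lambda$ small enough to simultaneously handle all $T_i$ and all iterates $kT_i\le T_0$ (a finite condition) completes the proof. The main technical obstacle is the normal-form computation around $N_{T_i}$ justifying the displayed return map; once that is in hand, everything reduces to the classical Morse lemma together with the Morse--Bott nondegeneracy of $A(y)-\mathrm{Id}$.
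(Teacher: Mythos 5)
The paper does not prove this lemma; it is cited directly from Bourgeois's ``A Morse--Bott approach to contact homology'' (reference [Bo]), so there is no internal proof here to compare against. That said, your sketch follows essentially the same route as Bourgeois's argument: decompose $M$ into a compact region disjoint from the finitely many orbit submanifolds $N_{T_i}$ with $T_i\leq T_0$ (where $C^1$-closeness of $R_{\alpha_\lambda}$ to $R_\alpha$, together with compactness, rules out short closed orbits), and small tubular neighborhoods of the $N_{T_i}$, where a slice/return-map analysis identifies closed $R_{\alpha_\lambda}$-orbits with critical points of $f_{T_i}$ and verifies non-degeneracy from the block structure of the linearization. The ingredients you invoke --- Morse--Bott nondegeneracy of $A(y)-\mathrm{Id}$ on the symplectic normal, the implicit function theorem to first solve $z=O(\lambda)$ and then locate $y$ near critical points, and the admissibility of $f_{kT_i}$ to handle iterates --- are the right ones, and the scheme of the argument is correct.

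Two details are imprecise. You describe the drift as ``the gradient of $f_{T_i}$ with respect to a metric induced by $d\alpha$.'' Since $d\alpha$ descends to a symplectic form (not a metric) on $S_{T_i}$, the drift term is really the Hamiltonian (symplectic gradient) vector field of $f_{T_i}$: working out $R_{\alpha_\lambda}=\tfrac{1}{h}R_\alpha+V$ gives $\iota_V d\alpha|_\xi=\tfrac{1}{h^2}dh|_\xi$, so $V=\lambda X_{\bar f_{T_i}}+O(\lambda^2)$ with $X_{\bar f_{T_i}}$ Hamiltonian. Correspondingly, the first linearized block is $\mathrm{Id}+\lambda T_i\,J\,\mathrm{Hess}(f_{T_i})+O(\lambda^2)$ rather than $\mathrm{Id}+\lambda T_i\,\mathrm{Hess}(f_{T_i})$. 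This does not change the identification of fixed points with critical points, nor non-degeneracy (since $J$ is invertible), but it is exactly what produces the Morse-index shift in \eqref{eq:MBMUCZ}, so it is worth getting right if you intend the argument to feed into the index formula. Second, the iterate case deserves one more sentence: a critical point of $f_{T_i}$, viewed on the singular stratum $kS_{T_i}\subset S_{kT_i}$, must also be a critical point of $f_{kT_i}$ with Hessian positive definite in the directions normal to $S_{T_i}$ inside $S_{kT_i}$; this is what the admissible construction guarantees and what rules out spurious critical points on the singular locus. You gesture at this via ``inductive nesting,'' but it is the only genuinely delicate bookkeeping step and should be spelled out.
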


Let $x\in S_T$ be a critical point of $f_T,$ denote by $\gamma_x$ its corresponding non-degenerate $R_{\a_\lambda}$-Reeb orbit, and let $(\gamma_x)^k$ be its $k^{th}$ iteration for ($k=1,2,\ldots$). For small $\lambda,$ the Conley-Zehnder index of $(\gamma_x)^k$ can be computed by
\begin{equation}
\label{eq:MBMUCZ}
\MUCZ((\gamma_x)^k)=\MURS(S_{kT})-\frac{1}{2}\dim S_{kT} + ind_x(f_{kT}),
\end{equation}
where $ind_x(f_{kT})$ denotes the Morse index. 
Note that since $kS_T \subset S_{kT}$, the index $ind_x(f_T)$ is constant under iterations of $x$. The degree of a critical point of $f_T$ is set as $$|x|:=\MUCZ(\gamma_x)+n-3.$$

The Morse-Bott chain complex $\C^{MB}_{*}(M,\a)$ is generated by critical points $x\in S_T$ of the Morse functions $f_T$ and graded by $|x|$.  For the definition of the Morse-Bott differential $\partial^{MB}$; see \cite{Bo}.

For Reeb orbifolds, we can also introduce the notion of the mean index. Note by  \eqref{eq:MBMUCZ} and $\ia$, we have
\begin{equation}
\label{eq:MBMI}
\Delta(x)=\lim_{k \to \infty} \frac{\MURS(S_{kT})}{k},
\end{equation} for any $ x\in S_T$.
The right hand side of \eqref{eq:MBMI} is independent of  the orbit $x\in S_T$, and so we define the \textbf{mean index of a Reeb orbifold $S_T$} as 
\begin{equation}
\label{eq:Orb-MI}
\Delta(S_T):=\lim_{k \to \infty} \frac{\MURS(S_{kT})}{k}\ . 
\end{equation} 

Let us now turn to the relation between the homology $H_*(\C^{MB}_*(M,\a),\partial^{MB})$ and cylindrical contact homology. To compute the mean Euler characteristic set via  cylindrical homology, we restrict our attention to Morse-Bott type contact forms that satisfy the conditions of following theorem.

\begin{theorem}
\label{thm:MB}\cite{Bo} 
Let $(M,\alpha)$ be a contact manifold of Morse-Bott type for a contact structure $\xi$ on $M$ satisfying $c_1(\xi)=0$. Suppose that, for $T\in\sigma(\alpha)$, $N_T$ and $S_T$ are orientable, $\pi_1 (S_T)$ has no disorienting loops, and all Reeb orbits are good. Assume that the almost complex structure $J$ is invariant under the Reeb flow on all submanifolds $N_T$. Assume that the cylindrical homology is well-defined: $\C^{0}_* =0$ for $*=-1,0,1.$ Assume that there exist $c>0, c'$ such that $|\MURS(S_T)|>cT+c'$  for all orbit spaces $S_T$ of contractible periodic orbits, and that there exists $\Gamma_0 <\infty$ such that, for every Reeb trajectory leaving a small tubular neighborhood $U_T$ of $N_T$ at $p$, we have $\varphi_t(p)\in U_T$ for some $0<t<\Gamma_0$.  Then the homology $H_*(\C^{\bar{a}}_{*},\partial^{MB})$ of the Morse-Bott chain complex is isomorphic to the cylindrical contact homology $\HC^{cyl}_*(M,\alpha)$.
\end{theorem}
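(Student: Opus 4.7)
The plan is to follow Bourgeois's cascade approach, perturbing the degenerate Morse--Bott contact form and interpreting pseudoholomorphic cylinders in the limit as chains of cascades. I would first fix an action threshold $T_0$ and use the perturbation $\alpha_\lambda = (1 + \lambda \bar f_T)\alpha$ for $\lambda > 0$ small, which by the lemma quoted in Section \ref{sec:MBCC} renders all Reeb orbits of period at most $T_0$ non-degenerate and in bijection with critical points of the $f_T$, with Conley--Zehnder indices computed by \eqref{eq:MBMUCZ}. The standing hypotheses---$c_1(\xi) = 0$, orientability of the $N_T$ and $S_T$, absence of disorienting loops, goodness of every orbit, and $C^{0}_* = 0$ for $* = -1, 0, 1$---ensure that both sides of the asserted isomorphism are well-defined $\mathbb{Q}$-vector spaces graded consistently.

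The heart of the argument is to analyze, as $\lambda \to 0$, the moduli spaces of finite-energy $J$-holomorphic cylinders in $(\real \times M, d(e^s \alpha_\lambda))$ connecting generators whose degree differs by one. The claim to establish is that such cylinders Gromov-converge to \emph{cascade} configurations: finite sequences of honest $J$-holomorphic cylinders in the symplectization of $\alpha$, each with asymptotic ends on submanifolds $N_T$, linked by negative gradient trajectories of the $f_T$ on the orbifolds $S_T$ between consecutive cylinders. The hypothesis that $J$ is Reeb-invariant along each $N_T$ is used to guarantee that the evaluation maps of the cylinders at their asymptotic ends are transverse to the unstable/stable manifolds of the Morse functions, so that cascade moduli spaces cut out smooth manifolds of the expected dimension. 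Counting rigid cascades with signs (made possible by the orientability and disorienting-loop conditions) defines $\partial^{MB}$.

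The principal difficulty is the compactness-and-gluing package for cascades. Compactness rests on the Morse--Bott SFT compactness theorem of Bourgeois--Eliashberg--Hofer--Wysocki--Zehnder; the growth bound $|\MURS(S_T)| > cT + c'$ on contractible orbit spaces is what rules out bubbling off of low-index orbit spaces that would otherwise obstruct $(\partial^{MB})^2 = 0$ and the independence of the count from auxiliary choices, while the return-time bound $\Gamma_0$ prevents Reeb trajectories from drifting indefinitely in directions transverse to the $N_T$. Gluing then shows that each rigid cascade lifts, for all sufficiently small $\lambda$, to a unique (up to $\real$-translation) $\alpha_\lambda$-holomorphic cylinder, producing a chain-level identification between $(C^{MB}_*(M, \alpha), \partial^{MB})$ and the cylindrical contact complex $(C_*(M, \alpha_\lambda), \partial^{cyl})$ in the degree range determined by $T_0$.

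Finally, I would pass to the direct limit $T_0 \to \infty$ and invoke the invariance of cylindrical contact homology under the perturbations $\alpha \leadsto \alpha_\lambda$ (a standard SFT neck-stretching argument that relies again on the index-positivity estimate to keep the chain complexes computed in a given degree stable) to conclude $H_*(C^{MB}_*, \partial^{MB}) \cong \HC^{cyl}_*(M, \alpha)$. The goodness hypothesis is essential so that no generator needs to be discarded from $C^{MB}_*$ and so that the signed cascade counts behave as expected under iteration. The main obstacle throughout the argument is the transversality of the cascade moduli spaces, which in full rigor requires either the polyfold machinery of Hofer--Wysocki--Zehnder or a Cieliebak--Mohnke-style construction---issues which the authors explicitly acknowledge as outside the scope of the paper.
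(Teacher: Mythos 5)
This statement is not proved in the paper at all: it is imported verbatim from Bourgeois \cite{Bo} (as the citation in the theorem header indicates), so there is no internal argument to compare yours against. What you have written is, in effect, an outline of Bourgeois's original proof, and as such it is the right plan: perturb to $\a_\lambda=(1+\lambda\bar f_T)\a$ below an action threshold $T_0$, identify the generators of period $\leq T_0$ with critical points of the Morse functions $f_T$ with indices given by \eqref{eq:MBMUCZ}, show that as $\lambda\to0$ the $\a_\lambda$-holomorphic cylinders degenerate into cascades (cylinders for $\a$ asymptotic to the $N_T$, interleaved with gradient fragments of the $f_T$ on the $S_T$), glue rigid cascades back to honest cylinders for small $\lambda$, and let $T_0\to\infty$ using invariance of cylindrical contact homology under the perturbation.

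One adjustment of emphasis: the growth bound $|\MURS(S_T)|>cT+c'$ and the bounded return time $\Gamma_0$ are not primarily there to rule out bubbling in the cascade compactness argument or to rescue $(\partial^{MB})^2=0$; as the paper itself explains in Remark \ref{remark:IPN-orb}, their job is to guarantee that the \emph{extra} closed orbits of the perturbed form $\a_\lambda$ --- those of action beyond the threshold $T_0$, which do not correspond to critical points of any $f_T$ --- have index growing with the action, hence do not contribute to the homology in any fixed degree, so that the truncated complexes stabilize as $T_0\to\infty$ and $\partial^{MB}$ corresponds to $\partial^{cyl}$. Your final paragraph does use the estimate in this way, so this is a misplacement of weight rather than a gap. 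The remaining caveat is the one you already flag yourself: transversality for the cascade moduli spaces is taken for granted, consistent with the paper's general disclaimer about transversality in contact homology.
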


\begin{remark}
\label{remark:IPN-orb}
The existence assumptions of such $c>0, c'$ and  $\Gamma_0$ in Theorem \ref{thm:MB} are known as \textit{index-positivity/negativity} and \textit{bounded return time} conditions. These two assumptions guarantee that no extra orbits of the perturbed contact form $\alpha _{\lambda}$ contribute to the homology and that  $\partial^{MB}$ correponds to $\partial^{cyl}$. 

We note however, that $\partial^{MB}$ does not enter  the proof of Theorem \ref{thm:MBMEC}, and so the Morse-Bott MEC formula holds whenever the homology  $H_*(\C^{\bar{a}}_{*},\partial^{MB})$ is indeed defined. For example,  if the index-positivity/negativity condition is dropped, we still have a homology of the Morse-Bott chain complex, although it may no longer be isomorphic to $\HC^{cyl}_*(M,\alpha)$, cf. Remark \ref{remark:index-positivity}. For this reason, we include the possibility that there are Reeb orbifolds with zero mean index in the proof of Theorem \ref{thm:MBMEC}, even though it is not necessary due to the index-positivity/negativity assumption.
\end{remark}

\begin{remark}
To apply Theorem \ref{thm:MB}, a contact manifold of Morse-Bott type can only have good Reeb orbits.  This assumption cannot be dropped as it is required for the homology to be defined. These manifolds have no bad Reeb orbifolds, and therefore,  the Morse-Bott version of the  MEC formula has a slightly more simple form than the other versions. 
\end{remark}

\begin{remark}
The class of Morse-Bott type contact manifolds does not behave under handle attaching. Breaking a closed Reeb orbit during surgery cannot be avoided for  some contact manifolds; some examples are  the standard contact sphere and the Ustilosky spheres; see Section \ref{sec:examples}. However,  a contact manifold  in this class with finitely many simple Reeb orbifolds   is asymptotically finite. Therefore, we can combine the Morse-Bott MEC formula \eqref{eq:MBMEC} with Corollary \ref{cor:AFSHAMEC} to compute the mean Euler characteristic of a manifold obtained by a subcritical contact surgery, whenever the conditions of Theorems \ref{thm:MBMEC} and \ref{thm:WIP} are met. 
\end{remark}

\begin{remark}
In \cite[Question 4.10]{VK2}, there is an observation that for a Brieskorn manifold $X_a=\Sigma(a_0,\ldots,a_n)/S^1$, see also Section \ref{sec:examples}, contact homology can be related to Chen-Ruan's orbifold cohomology. It is suggested there that this might give some insight on how to compute contact homology for more general $S^1$-bundles over symplectic orbifolds.
\end{remark}

\subsection{Morse theory on the orbit spaces}
\label{sec:orb-Morse-theory}
This section  follows closely to \cite[Section 2.3]{Bo}.

For an orbifold $S_T$, the strata are the connected components of the sets with the same isotropy. Let $\{S_{T'}\}$ be the strata of simple orbifolds of $S_T$ (i.e. $T'|T$ and $S_{T'}\subseteq S_T$), and let $f_{_T}$ be the Morse functions on the orbit spaces $S_T$ defined in the previous section. 

\begin{definition}
A Morse function $f_T$ on the orbit space is called \textit{admissible} if, for every critical point $\gamma$ of $f_T$ with minimal period $T/k$, the unstable manifold of $\gamma$ is contained in $S_{T/k}$.
\end{definition}

The Morse functions $f_T$ are admissible by construction  and so Morse theory can be extended to orbifolds as seen by this next proposition.  

\begin{proposition}
[\cite{Bo}] \label{prop:orb-MT} 
If the Morse function $f_T$ is admissible on the orbit space $S_T$, then the Morse complex of $f_T$ is well-defined and its homology is isomorphic to the singular homology of $S_T$.
\end{proposition}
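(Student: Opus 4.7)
The plan is to prove the proposition by induction on the number of strata of $S_T$, using the admissibility hypothesis at every step to reduce to classical Morse theory on smooth manifolds.

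For well-definedness of $C^{MB}_*(f_T)$, I would observe that on each stratum $S_{T'}$ the restriction $f_T|_{S_{T'}}$ is an ordinary Morse function on a smooth manifold, and the set of critical points of $f_T$ on $S_T$ is precisely the union over $T'\mid T$ of the critical sets on each stratum. Choosing a metric whose local lifts to orbifold charts are equivariant and generic yields transversality for the gradient equation, and compactness of moduli spaces of trajectories follows from the standard broken flow-line analysis. The admissibility hypothesis $W^u(\gamma)\subseteq S_{T/k}$ for a critical point $\gamma$ of minimal period $T/k$ ensures that the unstable cells remain confined to their strata, which is essential for these standard arguments to go through in the stratified setting.

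For the isomorphism with singular homology I would induct on the number of strata of $S_T$. The base case is a single stratum, where $S_T$ is effectively a smooth manifold (after passing to a local orbifold cover) and the conclusion is classical Morse homology. For the inductive step, pick a deepest stratum $S_{T_0}$, which is a closed sub-orbifold with fewer strata. By admissibility, no gradient trajectory starting at a critical point of $f_T$ inside $S_{T_0}$ can escape $S_{T_0}$, so the free abelian group generated by these critical points forms a subcomplex canonically isomorphic to $C^{MB}_*(f_T|_{S_{T_0}})$. This yields a short exact sequence
\begin{equation*}
0 \to C^{MB}_*(f_T|_{S_{T_0}}) \to C^{MB}_*(f_T) \to Q_* \to 0,
\end{equation*}
where $Q_*$ is freely generated by the critical points of $f_T$ lying in $S_T\setminus S_{T_0}$. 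A tubular neighborhood deformation around $S_{T_0}$ identifies the homology of $Q_*$ with the relative singular homology $H_*(S_T, S_{T_0})$, while the inductive hypothesis applied to $S_{T_0}$ computes $H_*(S_{T_0})$. Comparing the long exact sequence in Morse homology arising from the short exact sequence above with the long exact sequence of the pair $(S_T, S_{T_0})$ in singular homology and invoking the five-lemma completes the induction.

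The main obstacle is setting up transversality and compactness for the gradient flow in a manner compatible with the orbifold stratification; it is precisely the admissibility condition that prevents unstable cells from spilling across strata and thereby preserves the filtration on which the inductive argument rests. Without admissibility one would have no control over how $\partial^{MB}$ interacts with the stratification and no natural way to construct the comparison map with singular homology.
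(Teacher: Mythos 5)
This proposition is quoted from Bourgeois \cite{Bo} and the paper gives no proof of its own; the relevant hint in the paper is the remark immediately after the proposition, which reads ``the Morse complex of $f_T$ coincides with the complex of the cell decomposition of the underlying space of $S_T$.'' Bourgeois' argument is thus to show directly that the closures of the unstable manifolds of an admissible Morse function furnish a CW decomposition of the underlying space of $S_T$ compatible with the stratification: admissibility is precisely the hypothesis ensuring that the attaching map of the cell $\overline{W^u(\gamma)}$ lands in the skeleton built from cells inside the stratum of $\gamma$. Once the Morse complex is literally the cellular chain complex of a CW structure, equality with singular homology is immediate and requires no induction.

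Your argument is organized differently: you filter by isotropy strata, note (correctly) that admissibility forces the critical points lying in a deepest stratum $S_{T_0}$ to span a subcomplex isomorphic to $C^{MB}_*(f_{T_0})$, and you try to close the induction with the five-lemma applied to the short exact sequence $0 \to C^{MB}_*(f_T|_{S_{T_0}}) \to C^{MB}_*(f_T) \to Q_* \to 0$. The gap is in the step where you assert that ``a tubular neighborhood deformation around $S_{T_0}$'' identifies $H_*(Q_*)$ with $H_*(S_T, S_{T_0})$. This is not automatic: if a submanifold $N$ happens to contain a Morse subcomplex of $f$, the quotient Morse complex need not compute $H_*(M,N)$; one needs compatibility of $N$ with the sublevel-set filtration of $f$, and here the critical values of $f_T$ on and off $S_{T_0}$ interlace in general. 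The positive definiteness of the normal Hessian of $f_T$ along $S_{T_0}$ is indeed the right ingredient, but to exploit it one must compare the handle (sublevel-set) filtration of $S_T$ with the stratum filtration, and the cleanest way to do so is exactly to construct the CW decomposition from the unstable manifolds and observe it restricts to a CW structure on $S_{T_0}$. At that point the short exact sequence and the five-lemma are no longer needed: one is simply computing cellular homology of a CW pair. So your outline captures the correct role of admissibility (flows confined to strata, hence the subcomplex), but its crucial step conceals essentially all of the content, and a rigorous completion would collapse back to the cellular-decomposition proof of \cite{Bo}.
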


\subsection{The orbifold invariant $\eul$}
\label{sec:orb-inv}

Let $G$ be an arbitrary compact Lie group and $M$ be a smooth $G$-manifold.  It is shown in \cite{Ill} that $M$ can be given an equivariant CW structure.  Suppose $M$ is also compact and that $G$ acts smoothly, effectively, and almost freely. Then $M$ is triangularizable as a finite $G$ CW-complex and $\X=M/G$ is a compact quotient orbifold with finitely many cells.

\begin{flushleft}
Set $\eul(\X)$ to be the following invariant:
\begin{equation}
\label{eq:def e}
	\eul(\X):=\underset{\bar{\sigma}}{\sum}(-1)^{\dim \bar{\sigma}}|Stab(\bar{\sigma})|,
\end{equation}
where the sum runs over the $q$-cells $\bar{\sigma}$ of $\X$ and $|Stab(\bar{\sigma})|$ is the order of the stabilizer subgroup of $\bar{\sigma}$ in $G$.
\end{flushleft}

We can calculate $\eul(\X)$ in the following way.
Let $\chi(\X)$ denote the Euler characteristic of the underlying space.  
First, for all minimal orbifolds $\X$, set $\CHI(\X)=\chi(\X)$. Next, consider any orbifold $\X$, the strata of $\X$ are the connected components of sets with the same isotropy subgroup.  Let $\{\X_i\}$ be the strata of simple orbifolds of $\X$ then we define $\CHI(\X_i)$ recursively as
\begin{equation} \label{CHI}
\CHI(\X_i):=\chi(\X_i)-\underset{\X_i'}{\sum}\CHI(\X_i'),
\end {equation}
where the sum runs over the  strata $\{\X_i'\}$ of simple orbifolds of of $\X_i$.

\begin{lemma} The orbifold invariant $\eul(\X)$ can be expressed as
\begin{equation}
\label{eq:euler}
\eul(\X)=\underset{\X_i}{\sum}\CHI(\X_i)\cdot|Stab(\X_i)|,
\end{equation}
where the sum runs over the strata $\{\X_i\}$ of $\X$.
\end{lemma}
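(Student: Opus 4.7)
The plan is to expand $\eul(\X)$ directly from \eqref{eq:def e}, group cells of an equivariant CW decomposition according to which stratum they lie in, and then match the resulting inner sum over each stratum with the recursively defined quantity $\CHI(\X_i)$ by induction on the depth of the stratification.

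First, I would fix an equivariant CW decomposition of $M$ in the sense of Illman \cite{Ill}, chosen subordinate to the orbit-type stratification, so that each open cell of $\X = M/G$ lies in exactly one stratum $\X_i$ and the integer $|Stab(\bar\sigma)|$ depends only on that stratum, equalling $|Stab(\X_i)|$. Setting
$$e(\X_i) \;:=\; \sum_{\bar\sigma\subset\X_i}(-1)^{\dim\bar\sigma},$$
the defining expression \eqref{eq:def e} rearranges as
$$\eul(\X)=\sum_i|Stab(\X_i)|\cdot e(\X_i),$$
so it remains to show $e(\X_i)=\CHI(\X_i)$ for every stratum $\X_i$, since \eqref{eq:euler} then follows.

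The identity $e(\X_i)=\CHI(\X_i)$ I would prove by induction on the depth of $\X_i$, meaning the length of the longest chain of proper sub-strata in the closure $\bar\X_i$. If $\X_i$ is minimal, then $\bar\X_i=\X_i$ contains no sub-strata, so every cell of the CW structure of $\bar\X_i$ is counted in $e(\X_i)$, giving $e(\X_i)=\chi(\X_i)=\CHI(\X_i)$ by the base case of the recursive definition. For a general stratum $\X_i$, the cells of $\bar\X_i$ partition into those in the open top stratum and those lying in some proper sub-stratum $\X_i'$, which yields
$$\chi(\X_i)\;=\;e(\X_i)+\sum_{\X_i'}e(\X_i'),$$
where the sum runs over the strata of simple orbifolds of $\bar\X_i$ lying in its boundary. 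Each such $\X_i'$ has strictly smaller depth, so the inductive hypothesis gives $e(\X_i')=\CHI(\X_i')$. Comparing with \eqref{CHI} then reads off $e(\X_i)=\chi(\X_i)-\sum_{\X_i'}\CHI(\X_i')=\CHI(\X_i)$, closing the induction.

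The only real obstacle is the bookkeeping step of producing an equivariant CW decomposition refining the stratification so that closures of strata become subcomplexes and stabilizer orders are locally constant on cells; this is exactly the content of Illman's theorem in our almost-free smooth setting. Once that ingredient is in place, the remaining argument is a clean inclusion-exclusion along the stratification and involves no further analytic input.
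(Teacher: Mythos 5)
Your proof is correct and follows essentially the same approach as the paper: group the cells of an equivariant CW decomposition by stratum, factor out the constant stabilizer order on each stratum, and identify the alternating count of cells in the open stratum $\X_i$ with $\CHI(\X_i)$ by inclusion--exclusion along the closure poset. The only difference is that you make the induction on stratum depth explicit (and explicitly invoke Illman's theorem to get a CW decomposition refining the stratification), whereas the paper states the same identity $\sum_{\text{simple }\bar\sigma_i}(-1)^{\dim\bar\sigma_i}=\chi(\X_i)-\sum_{\X_j\subset\X_i}\CHI(\X_j)$ directly without spelling out the inductive justification. Your version is a bit more careful, but it is the same argument.
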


\begin{proof}
\label{proof of lemma}
We start with the contribution to $\eul(\X)$ by each of its minimal strata.  Let $\X_0$ be a minimal orbifold and let $\{\bar{\sigma _0}\}$ be its q-cells. Then,
\[
\underset{\bar{\sigma _0}}{\sum}(-1)^{\dim \bar{\sigma _0}}|Stab(\bar{\sigma _0})|=\chi(\X_0)\cdot|Stab(\X_0)|.
\]
The additional contribution from any simple orbifold $\X_i\subset\X$ is
\begin{align}
\underset{simple\ \bar{\sigma _i}}{\sum} (-1)^{dim(\sigma _i)}|Stab(\bar{\sigma _i})| 
		& = \Big(\chi(\X_i) -\underset{\X_j\subseteq\X_i}{\sum}\CHI(\X_j)\Big)\cdot |Stab(\X_i)| \\
		& = \CHI(\X_i)\cdot |Stab(\X_i)|.
\end{align}
The left hand side sums over  simple cells. These are cells of the orbifold $\X_i$ that are not contained in any smaller orbifold  $\X_j\subset\X_i$.

Then summing over all the strata of $\X$, the lemma is proved.
\end{proof}

\begin{remark}
Let $\{S_{T_i}\}$ be  the simple strata of a Reeb orbifold $S_T$ and suppose $f_T$ is a Morse function on $S_T$ as in Section \ref{sec:MBCC}. By Proposition \ref{prop:orb-MT}, the Morse complex of $f_T$ coincides with the complex of the cell decomposition of the underlying space of $S_T$.  Then  $\eul(S_T)=\sum(-1)^{ind(x)}|Stab(x)|$, where the sum runs over all critical points $x$ of $f_T$, and for short, denote the Morse index by $ind(x)$. So now we have $\eul(S_T)=\sum\CHI(S_{T_i})\cdot|Stab(S_{T_i})|$.
\end{remark}

We refer the reader to \cite{ALR} for further details on this invariant $\eul$.

\subsection{Proof of Theorem \ref{thm:MBMEC}}
\label{sec:pf-MBMEC}

We prove the statement for $\chi^+(M,\xi)$ and the case for $\chi^-(M,\xi)$ is proved similarly.
Recall that the vector space $\C_l^{MB}(M,\alpha)$ is generated by the set of critical points $x$ of the Morse functions $\fT$ with degree $l=|x|=\MURS(S_T)-\frac{1}{2}\dim(S_T)+ind(x)+n-3$, for $T\in\sigma(\a)$. A contact form that satisfies the assumptions of Theorem \ref{thm:MB} has only good Reeb orbits and therefore all Reeb orbifolds are good. By $\ia$  and \eqref{eq:MBMUCZ}, for $x\in \C_l^{MB}\cap S_T$, we have
\begin{equation}\label{eq:1MB}
\big|\MURS(x^k)-\frac{1}{2}\dim(S_T)+ind(x)-k\Delta(x)\big|<n-1,
\end{equation}
and hence
\begin{equation}\label{eq:2MB}
-2<|x^k|-k\Delta(x)<2n-4.
\end{equation}

There are finitely many simple critical points since there are finitely many  simple orbit spaces by assumption. By this fact and the inequality (\ref{eq:2MB}), $\C_l^{MB}(M,\alpha)$ can be infinite dimensional only if $\Delta(x)=0$.  Although the  class of manifolds that have  Reeb orbits with $\Delta(x)=0$ do not meet all conditions of Theorem \ref{thm:MB} , we still cover this case  (see Remark \ref{remark:IPN-orb}). We mention that there exist integers $l_+$ and $l_-$ such that $\dim \HC_{\pm l}(M,\xi)<\infty$, for example, one can just take $l_+=2n-4$ and $l_-=-2$.

Considering the case $\Delta(S_T)>0$, denote the truncation of $\C_*^{MB}$ from below at $l_+$ and from above at $N$ for some $N>l_+$ by

\[
\C_l^{(N)}=
	        \begin{cases}
		      \C_l^{MB} &\text{if } l_+\leq l \leq N,\\
	 	      0   			&\text{otherwise}.
					\end{cases}
\]

\noindent The Euler characteristic of $\C_*^{(N)}$ is then
\[
\chi\big(\C_*^{(N)}\big)=\sum{(-1)^l \dim \C_l^{(N)}}=\sum^{N}_{l=l_+}{(-1)^l\dim \C_l^{MB}}.
\] 
\noindent The chain complex $\C_*^{(N)}$ is generated by the critical points $x\in \C^{MB}_*$ and its $k^{th}$ iterations, where $|x^k|\in [l_+,N]$.  The inequality (\ref {eq:2MB}) shows that $k$ ranges from some constant independent of $N$ up to roughly $N/\Delta(x)$. 

There are no bad Reeb orbits by assumption, and  so $\sigma(x^k)=\sigma(x)$, for all $k$. Then iterations of each $x$ contribute to $\chi\big(\C_*^{(N)}\big)$ by $N\sigma(x)/\Delta(x)+O(1)$ as $N\rightarrow\infty$.

Fix one maximal orbifold $S_T$, let $\{S_{T_i}\}$ be the set of simple strata of $S_T$, and denote the order of the stabilizer of $S_{T_i}$ by $p_i:=|Stab(S_{T_i})|$. Then iterations of any critical point $x'\in S_T$ of $\fT$ can be expressed as $x'=x^{p_i}$, where $x$ is a simple periodic orbit of $S_{T_i}$. Then the contribution to $\chi\big(\C^{(N)}_*\big)$ by iterations of each critical point $x$ of minimal period $T_{_i}$ can be expressed as
\[
\frac{N\sigma(x)}{\Delta(x)}=\frac{N\sigma(x)}{\Delta (x^{p_{_i}})}\cdot p_{_i}=
\frac{N\sigma (S_T)}{\Delta (S_T)}(-1)^{ind(x)}\cdot p_{_i}.
\]
Next, we want to sum over all critical points. First, start with the minimal orbit space with the smallest period $T_0$. Summing over all critical points of $\fT$ with minimal period $T_{0}$, we get that the contribution by $S_{T_0}$ and its iterations to $\chi\big(\C_*^{(N)}\big)$ is roughly
\[
\underset{x \ c.p. \in S_{T_0}}{\sum}\frac{N\sigma(x)}{\Delta(x)}=
\underset{x \ c.p. \in S_{T_0}}{\sum}\frac{N\sigma(S_{T})}{\Delta(S_{T})}(-1)^{ind(x)}\cdot p_{_0}=
\frac{N\sigma(S_T)}{\Delta(S_T)}\cdot\chi(S_{T_0})\cdot p_{_0}.
\]

\noindent Then, consider  any $S_{T_i}\subset S_T$. Summing over only the critical points $x\in S_{T_i}$ of $f_T$ with minimal period $T_i$, the orbifold $S_{T_i}$ and its iterations, make the following additional contribution:
\[
\underset{x \ min'l c.p.\in S_{T_i}} {\sum}\frac{N\sigma(x)}{\Delta(x)}=
\underset{x \ min'l c.p.\in S_{T_i}} {\sum}\frac{N\sigma(S_{T})}{\Delta(S_{T})}(-1)^{ind(x)}\cdot p_{_i}=
\frac{N\sigma(S_T)}{\Delta(S_T)}\cdot\CHI(S_{T_i})\cdot p_{_i}.
\]
Taking the sum over all the simple orbifolds $S_{T_i}$ of $S_T$, we get that the contribution by $S_T$ to $\chi\big(\C_*^{(N)}\big)$ is roughly
\[
\underset{S_{T_i}}{\sum}\frac{N\sigma(S_T)}{\Delta(S_T)}\cdot\CHI(S_{T_i})\cdot p_{_i}=\frac{N\sigma(S_T)}{\Delta(S_T)}\cdot\eul(S_T).
\]

Now, adding the contributions from all maximal orbit spaces with positive mean index, we get
\[
\chi\big(\C_*^{(N)}\big)=N{\sum}^+\frac{\sigma(S_T)}{\Delta(S_T)}\cdot\eul(S_T) +O(1) \text{ as } N \rightarrow \infty,
\]
and hence
\[ 
\lim_{N \to \infty}\frac{\chi\big(\C_*^{(N)}\big)}{N}={\sum}^+\frac{\sigma(S_T)}{\Delta(S_T)}\cdot\eul(S_T).
\] 

To finish the proof it remains to show that 
\begin{equation}
\label{eq:chi}
\chi^+(M,\xi)=\lim_{N\to\infty} \chi\big(\C_*^{(N)}\big)/N.
\end{equation}
By the definition of $\C_*^{(N)}$, we have 
$H_l\big(\C_*^{(N)}\big)=\HC_l(M,\xi)$ when $l_+<l<N$. Furthermore,
$|H_N\big(\C_*^{(N)}\big)-\HC_N(M,\xi)|=O(1)$ since $\dim \C_N=O(1)$.
Hence, 
\[
\chi\big(\C_*^{(N)}\big)=\sum_l(-1)^l\dim H_l\big(\C_*^{(N)}\big)
=\sum_{l=l_+}^N (-1)^l\dim\HC_l(M,\xi)+O(1)
\]
and \eqref{eq:chi} follows. This completes the proof of the theorem.

\section {Examples}\label{sec:examples}
In this section, we use the Morse-Bott version of the MEC formula to calculate the mean Euler characteristic of several examples. Some of these were previously worked out in \cite{GK}.  It should be noted that in these examples, the contact homology can also be calculated; see \cite{AM,BoPhD,Pati:thesis}. Our computations make use of some index data from in \cite{BoPhD}.

\begin{example}[The standard contact sphere]
\begin{flushleft}
Consider the  sphere $S^{2n-1}\subset\mathbb {C}^n$ with the standard contact  form 
$$\alpha_{st}=\frac{i}{2}\sum^{n}_{j=1}(z_jd\bar{z_j} - \bar{z_j}dz_j)|_{S^{2n-1}}.$$ 
The form $\alpha_{st}$ restricts to a contact form on the sphere $S^{2n-1}$ and the corresponding Reeb flow is given by 
$$( z_1, \ldots, z_n) \mapsto (e^{2\pi it}z_1,\ldots,e^{2\pi it}z_n).$$ 
All the orbits of the Reeb flow are closed forming only one simple orbit space $S_1$ and we have  
$$\MURS(S_{k1})=2kn,\ \sigma(S_1)=1, \text{ and } \ \Delta(S_1)=2n.$$
We have $$\eul(S_1)=\chi(S_1)\cdot|Stab(S_1)|=n,$$
since this orbit space is also maximal and $S_1\simeq \mathbb {CP}^{n-1}$. Putting this data into the Morse-Bott MEC formula \eqref{eq:MBMEC}, we get $$\chi^{+}(S^{2n-1},\xi_{st})=\frac{1}{2} \ \text{ and }\ \chi^{-}(S^{2n-1},\xi_{st})=0.$$
\end{flushleft}
\end{example}

\begin{example}[Circle bundle]
\label{ex. circle bundle}
Let $\pi\col M^{2n-1} \rightarrow B$ be a prequantization circle bundle over a closed symplectic manifold $(B,\omega)$. In other words, we have $\pi^* \omega = d\alpha$ where $\a$ is a connection form (real valued) on $M$. Then $\a$ is a contact form whose Reeb flow is the circle action on $M$. The Morse-Bott version of the MEC formula (Theorem \ref{thm:MBMEC}) readily applies to a calculation of $\chi^+(M,\xi)$, where $\xi=ker(\a)$, provided that $c_1(\xi)=0$ and the weak index- positivity codition is satisfied. Namely, let us assume first that $M$ is simply connected. Then 
$$\chi^+ (M,\xi)= \frac{\chi(B)}{2\left< c_1(TB),u\right>},$$
where $u\in \pi_2(B)$ is the image of a disk bounded by the fiber in $M$. (The assumption that $c_1(\xi)=0$ guarentees that the denominator is independent of the choice of the disk.) This is an immediate consequence of Theorem \ref{thm:MBMEC} and the observation that in this case $\Delta(B)=2\left< c_1(TB),u\right>$, which is routine to verify; see also \cite[Section 9.1]{BoPhD}. 

Dealing with the case when $M$ is not simply connected, fix a section $\s$ of $S^1[(\Lnc\xi)^{\otimes 2}]$, then the denominator $\Delta(B)$ can be geometrically described as follows.  For $p\in B$, pick a unitary frame of $T_{p}B$ and lift it to a trivialization along the fiber over $p$. This trivialization gives rise to a section $\s'$ of  $S^1[(\Lnc\xi)^{\otimes 2}]$ also along the fiber. Then, essentially by definition, $\Delta(B)/2$ is the rotation number of $\s'$ with respect to $\s$. (By the way, this also proves that $\Delta(B)=2\left< c_1(TB),u\right>$ in the simply connected case.)

When $M$ is simply connected, the weak index-positivity assumption is satisfied  when $(B,\omega)$ is positive monotone, i.e., $[\omega]=\lambda c_1(TB)$ on $\pi_2(M)$ and $\lambda > 0$. This condition ensures that $\Delta(B)>0$ and $\chi^{-}(M,\xi)=0$. A similar argument applies in the  weakly index-negative case, corresponding to $\lambda<0$, with the roles of $\chi^+$ and $\chi^-$ interchanged.

A similar method can also be used to calculate the mean Euler characteristic  for a general Boothby-Wang fibration; see \cite[Section 9.1.2]{Pati:thesis} for relevant index calculations. Here however, we are not going to follow this line of reasoning but instead, consider a specific example of interest.
\end{example}

\begin{example}[Ustilosky spheres]
\begin{flushleft}
A Brieskorn manifold $\Sigma(a)=\Sigma(a_0,\cdots,a_n)$ is defined as the intersection of $$\V(a)=V(a_0,\cdots,a_n)=\{(z_0,\cdots,z_n)\in\mathbb{C}^{n+1}:z_0^{a_0}+\cdots+z_n^{a_n}=0\}$$ with the unit sphere $S^{2n+1}\subset\mathbb{C}^{n+1},$ where $a_j\geq2$ are natural numbers. This manifold  admits a contact form $\alpha$. A well-known result of Brieskorn is that when $n=2m+1$ and $p=\pm1 \ (\text{mod}\ 8)$, $a_0=p, a_1=2,\ldots, a_n=2$, then $\Sigma(a)$ is diffeomorphic to $S^{4m+1}$. The 1-form 
$$\alpha_p=\frac{i}{8}\sum^{n}_{j=0}a_j(z_jd\bar{z_j} - \bar{z_j}dz_j)$$ 
restricted to $\Sigma(a)$ is a contact form. The Reeb field is $R_{\a_p}=4i(\frac{z_0}{a_0},\cdots\frac{z_n}{a_n})$ and the corresponding Reeb flow is given by
\[
(z_0,\ldots,z_n) \mapsto (e^{4it/p}z_0, e^{2it}z_1, \ldots, e^{2it}z_n).
\]

There are two simple orbifolds.
\begin{enumerate}
	\item $T=\pi$ $(z_0=0)$:
				For $S_{\pi}$, we have $|Stab(S_{\pi})|=p$, $S_{\pi} \simeq \mathbb{CP}^{n-2}$, and $\CHI(S_{\pi})=n-1$.
	\item $T=p\pi$ $(z_0\neq 0)$:
				The orbifold $S_{p\pi}$ is a maximal orbit space and contains a $p$-cover of $S_{\pi}$. Since $S_{p\pi}$ is homeomorphic to 	
				$\mathbb{CP}^{n-1}$,  we get $\CHI(S_{p\pi})=1$.  
\end{enumerate}

The index of a $k$-cover of $S_p\pi$ is given by 
$$\MURS(S_{kp\pi})=2k((n-2)p+2).$$

The above data gives us
$$\eul(S_{p\pi})=(n-1)p+1, \  \Delta(S_{p\pi})=2((n-2)p+2),\ \text{ and }\ \sigma(S_{p\pi})=1.$$

Then by the Morse-Bott MEC formula \eqref{eq:MBMEC}, we get 
$$\chi^+(M,\xi_p)=\frac{1}{2}\frac{((n-1)p+1)}{((n-2)p+2)}\ \text{ and }\ \chi^-(M,\xi_p)=0.$$

This verifies that the mean Euler characteristic distinguishes infinitely many inequivalent contact structures on $S^{2n-1}$ given by different choices of $p$ where $p=\pm1 \ (\text{mod}\ 8)$.
\end{flushleft}
\end{example}

\begin{remark}
On $S^2 \times S^3$, as is shown in \cite{AM}, there are infinitely many inequivalent contact structures $\xi_k$, for $k\in \mathbb{N}$, in the unique homotopy class determined by the vanishing of the first Chern class. It is not hard to see that the mean Euler characteristic also distinguishes the contact structures $\xi_k$. 

\end{remark}

\end{document}